 \newtheorem{theorem}{Theorem}[section]
 \newtheorem{corollary}[theorem]{Corollary}
 \newtheorem{lemma}[theorem]{Lemma}
 \theoremstyle{definition}
 \newtheorem{definition}[theorem]{Definition}
 \theoremstyle{remark}
 \newtheorem{remark}[theorem]{Remark}
 \newtheorem{example}[theorem]{Example}
 \numberwithin{equation}{section}
\begin{document}
\begin{center}\Large{\textbf{On graded nil clean rings}}
\footnote{This is a preprint of a paper published in Communications in Algebra, 46 (9) (2018), 4079--4089.\\
https://doi.org/10.1080/00927872.2018.1435791\\
https://www.tandfonline.com/loi/lagb20}
\let\thefootnote\relax\footnotetext{2010 \emph{Mathematics Subject 
Classification} 16W50, 16U99, 16S34, 16S50\\
\emph{Key words and phrases.} Graded rings and modules, nil clean rings,
group rings, matrix rings}
\end{center}
\begin{center}\textbf{Emil Ili\'{c}-Georgijevi\'{c} and Serap 
\c{S}ahinkaya}\end{center}
\begin{abstract}
\noindent In this paper we introduce and study the notion of a graded (strongly) 
nil clean ring which is group graded. We also deal with extensions of graded
(strongly) nil clean rings to graded matrix rings and to graded group rings. The 
question of when nil cleanness of the component, which corresponds to the 
neutral
element of a group, implies graded nil cleanness of the whole graded ring is
examined. Similar question is discussed in the case of groupoid graded rings as 
well. 
\end{abstract}
\section{Introduction}
\noindent Since the introduction of \emph{clean rings} in \cite{wkn} as rings in
which every element can be written as a sum of an idempotent element and a unit
element, many authors have investigated rings in which
elements can be written as a sum of an idempotent element and an element with a
certain property. Recently, in \cite{ajd}, the notion of a (\emph{strongly})
\emph{nil clean ring} was introduced as a ring in which every element can be 
written as
a sum of an idempotent element and a nilpotent element (such that an idempotent
element and a nilpotent element mutually commute). Many significant results
concerning extensions of such rings to matrix rings have been obtained 
(see \cite{bcdm,klz,kwz}) which are 
related to the famous K\"{o}the's Conjecture (see \cite{jm}). Also, in 
\cite{mrs}, an
extension to group rings in commutative case is investigated, while in 
\cite{stz} this
is generalized to noncommutative case. On the other hand, theory of graded rings
has been studied by many authors (see \cite{ak,noy}). The aim of this paper is to 
introduce the graded ring theory into the study of the above mentioned ring 
element properties. Following \cite{ajd}, we introduce the notion of a \emph{graded} 
(\emph{strongly}) \emph{$\mathcal{P}$-clean ring}, where $\mathcal{P}$ is a
\emph{graded ABAB-compatible property} of a homogeneous element of a graded
ring. In particular, we study \emph{graded clean} and \emph{graded nil clean rings}.
However, emphasis is on graded (strongly) nil clean rings.\\ 
\indent After establishing some characterizations and
basic properties of graded (strongly) nil clean rings, we focus on extensions of
graded clean and graded nil clean rings to
graded matrix rings thus generalizing results from \cite{hn} and \cite{ajd}. As
already mentioned, in \cite{stz} nil clean group rings are investigated. Here we 
extend some of the results to graded group rings case. This yields an interesting
question on how the graded nil cleanness of a group graded ring depends
on the nil cleanness of the component which corresponds to the neutral element of
a group. We also take a look at a similar question in the case of rings graded 
by a partial groupoid (see \cite{ak1,ak2}).  
\section{Preliminaries}
\noindent All rings are assumed to be associative with identity. For more details on 
everything stated in the most of this section, we refer to \cite{ak,noy}.\\ 
\indent Let $R$ be a ring, $G$ a group with the identity element $e,$ and let
$\{R_g\}_{g\in G}$ be a family of additive subgroups of $R.$
Recall that $R$ is said to be $G$-\emph{graded} if $R=\bigoplus_{g\in
G}R_g$ and $R_gR_h\subseteq R_{gh}$ for all $g,h\in G.$ The set
$H=\bigcup_{g\in G}R_g$ is called the \emph{homogeneous part} of
$R,$ elements of $H$ are called \emph{homogeneous}, and subgroups 
$R_g$ $(g\in G)$ are called \emph{components}. If $a\in R_g,$ then we say that
$a$ has the \emph{degree} $g.$\\
\indent In the most of this paper we work in the category whose objects are 
$G$-graded
rings and morphisms are homomorphisms of rings which are degree-preserving.\\
\indent A right ideal 
(left, two-sided) $I$ of a graded ring $R=\bigoplus_{g\in G}R_g$ is called
\emph{homogeneous} or \emph{graded} if $I=\bigoplus_{g\in G}I\cap
R_g.$ If $I$ is a two-sided homogeneous ideal (homogeneous ideal in the rest of
the paper), then $R/I$ is a $G$-graded ring with components 
$(R/I)_g=R_g/I\cap R_g.$
A graded ring $R$ is \emph{graded-nil} if every homogeneous
element of $R$ is nilpotent. We also know that $1\in R_e.$\\ 
\indent As told in the previous section, we extend some results from \cite{stz} 
to the
graded case. In order to do that, we recall a way to make a group ring graded. Let 
$R=\bigoplus_{g\in G}R_g$ be a $G$-graded ring, and observe the group ring 
$R[G].$ Then, according to \cite{cn}, $R[G]$ is $G$-graded (actually, strongly
graded) with the $g$-th component $(R[G])_g=\sum_{h\in G}R_{gh^{-1}}h$ and
with the multiplication defined via the rule 
$(r_gg')(r_hh')=r_gr_h(h^{-1}g'hh'),$
where $g,g',h,h'\in G$ and $r_g\in R_g,$ $r_h\in R_h.$\\
\indent If $H$ is a normal subgroup of $G,$ then, according to \cite{noy}, we may
observe $R[H]$ as a $G$-graded ring $\bigoplus_{g\in G}(R[H])_g,$ where
$(R[H])_g=\bigoplus_{h\in H}R_{gh^{-1}}h,$ and where the multiplication is given 
by $(r_gg')(r_hh')=r_gr_h(h^{-1}g'hh'),$
where $g,h\in G,$ $g',h'\in H$ and $r_g\in R_g,$ $r_h\in R_h.$\\
\indent If $R=\bigoplus_{g\in G}R_g$ is a $G$-graded ring, then a right $G$-graded 
$R$-module is a right $R$-module $M$ such that $M=\bigoplus_{x\in G}M_x,$ 
where $M_x$ are additive subgroups of $M,$ and such that 
$M_xR_g\subseteq M_{xg}$ for all $x, g\in G.$ A submodule $N$ of a $G$-graded
$R$-module $M=\bigoplus_{x\in G}M_x$ is called
\emph{homogeneous} if $N=\bigoplus_{x\in G}N\cap M_x.$
The category whose objects are right $G$-graded $R$-modules 
and morphisms are homomorphisms which are degree-preserving 
is denoted by $R-gr.$\\ 
\indent Let $M=\bigoplus_{x\in G}M_x\in R-gr.$ If $\mathrm{END}_R(M)_g$ 
consists of
endomorphisms $f:M\to M$ such that $f(M_x)\subseteq M_{gx}$ for all $x\in G,$
then $\mathrm{END}_R(M)=\bigoplus_{g\in G}\mathrm{END}_R(M)_g$
is a $G$-graded ring with respect to the usual addition and multiplication defined by
$f\cdot g=f\circ g$ $(f, g\in\mathrm{END}_R(M)).$\\
\indent If $R$ is a $G$-graded ring and $n$ a natural number, then we know
that the matrix ring $M_n(R)$ over $R$ can be made into a $G$-graded ring in the
following way. Let $\overline{\sigma}=(g_1,\dots,g_n)\in G^n,$ $\lambda\in G$ 
and 
\[M_n(R)_\lambda(\overline{\sigma})=\left(%
\begin{array}{cccc}
  R_{g_1\lambda g_1^{-1}} & R_{g_1\lambda g_2^{-1}} & \dots & R_{g_1\lambda 
g_n^{-1}} \\
  R_{g_2\lambda g_1^{-1}} & R_{g_2\lambda g_2^{-1}} & \dots & R_{g_2\lambda 
g_n^{-1}} \\
  \vdots & \vdots & \dots & \vdots \\
  R_{g_n\lambda g_1^{-1}} & R_{g_n\lambda g_2^{-1}} & \dots & R_{g_n\lambda 
g_n^{-1}} \\
\end{array}%
\right).\]
Then $M_n(R)=\bigoplus_{\lambda\in G}M_n(R)_\lambda(\overline{\sigma})$ is a
$G$-graded ring with respect to the usual matrix addition and multiplication. Usually, 
this ring is denoted by $M_n(R)(\overline{\sigma}).$\\
\indent A graded module $M\in R-gr$ is said to be \emph{graded simple} 
(or \emph{graded irreducible}) if $MR\neq0$ and if the only homogeneous
submodules of 
$M$ are trivial submodules. The \emph{graded Jacobson radical} $J^g(R)$ of a 
$G$-graded ring $R$ is defined to be the intersection of annihilators of all 
graded simple graded $R$-modules. It is known that $J^g(R)$ coincides with the
intersection of all maximal homogeneous right ideals of $R,$ and that it is left-right
symmetric. As usual, $J(R)$ denotes the classical Jacobson radical of $R.$\\
\indent We also recall the notion of a graded ring graded in the following sense.
\begin{definition}[\cite{ak2,ak,ak1}]\label{sgris} Let $R$
be a ring, and $S$ a partial groupoid, that is, a set with a
partial binary operation. Also, let $\{R_s\}_{s\in
S}$ be a family of additive subgroups of $R.$ We say that
$R=\bigoplus_{s\in S}R_s$ is $S$-\emph{graded}
and $R$ \emph{induces} $S$ (or $R$ is an $S$-\emph{graded
ring inducing} $S$) if the following two conditions hold:
\begin{itemize}
    \item[$i)$] $R_s R_t\subseteq R_{st}$ whenever $st$ is 
defined;
    \item[$ii)$] $R_s R_t\neq0$ implies that the product $st$ is
    defined.
\end{itemize}
\end{definition} 
\indent We refer to \cite{ak} for more information concerning $S$-graded rings
inducing $S$ (for taking quotients by homogeneous ideals, one may also consult
\cite{ak3}).\\
\indent The notion of a graded ring presented in \cite{ha1}, as 
well as in \cite{cha,ha,agg}, is equivalent to that from Definition~\ref{sgris}.
There it is studied from the homogeneous point of view by observing the
homogeneous part of a graded ring with induced partial addition and everywhere
defined multiplication. Such a structure
is named \emph{anneid} \cite{cha,ha,agg} and an origin of this approach goes 
back to \cite{mk}. For a survey on anneids, one may also consult \cite{kv,mv}. 
Everything obtained for anneids holds for $S$-graded rings inducing $S$ and vice
versa.\\
\indent The role of a 
degree-preserving homomorphism is taken by the following notion. If $R$ is an 
$S$-graded
ring inducing $S$ and $R'$ is an $S'$-graded ring inducing $S',$ then a ring
homomorphism $f:R\to R'$ is called \emph{homogeneous} \cite{cha,ha,agg} if a
homogeneous element is mapped to a homogeneous element, and if the fact that
$f(x)$ is a nonzero homogeneous element of $R'$ implies that $x$ is a
homogeneous element of $R.$ The corresponding notion for anneids is simply 
\emph{homomorphism of anneids}.
\section{Graded nil clean rings}
Let $G$ be a group with the identity element $e.$
\begin{definition}
A homogeneous element $r$ of a $G$-graded ring $R$ is
called \emph{graded nil clean} (\emph{graded strongly nil clean}) if it can be 
written
as a sum of a homogeneous idempotent element $f$ and a homogeneous nilpotent
element $b$ (such that $fb=bf$). A $G$-graded ring is called 
\emph{graded nil clean} (\emph{graded strongly nil clean}) if every of its
homogeneous elements is graded nil clean (graded strongly nil clean).
\end{definition}
\begin{remark}\label{remark}
Let a $G$-graded ring $R=\bigoplus_{g\in G}R_g$ be graded (strongly) nil clean. 
If a
homogeneous idempotent is nonzero, it has to be from $R_e,$ of course. If 
$x\in R_g$ is a nonzero element, then either it is nilpotent or $g=e$ and 
$x=y+z,$
where $y$ is an idempotent element and $z$ a nilpotent element from $R_e$ (such
that $yz=zy$), that is, $R_e$ is a (strongly) nil clean ring. Obviously, every 
graded-nil ring is strongly nil clean.
\end{remark}
\begin{example}\label{example}
Let $S$ be a nil clean ring. Then the ring of matrices $R=\left(%
\begin{array}{cc}
  S & S \\
  S & S \\
\end{array}%
\right)$ is a graded nil clean ring with respect to a well known 
$\mathbb{Z}$-grading
$R_0=\left(%
\begin{array}{cc}
  S & 0 \\
  0 & S \\
\end{array}%
\right),$ $R_1=\left(%
\begin{array}{cc}
  0 & S \\
  0 & 0 \\
\end{array}%
\right),$ $R_{-1}=\left(%
\begin{array}{cc}
  0 & 0 \\
  S & 0 \\
\end{array}%
\right),$ $R_i=\left(%
\begin{array}{cc}
  0 & 0 \\
  0 & 0 \\
\end{array}%
\right)$ for $i\in\mathbb{Z}\setminus\{-1,0,1\}.$
\end{example}
\indent We may introduce a more general notion following the notion of a 
\emph{clean $\mathcal{P}$-ring} from \cite{ajd}, where $\mathcal{P}$ is the
so-called \emph{ABAB-compatible property} that an element of a ring may satisfy.
If $R=\bigoplus_{g\in G}R_g$ is a $G$-graded ring, and $f$ an idempotent element 
of $R_e,$ then by a \emph{graded corner ring} we mean the $G$-graded ring
$fRf=\bigoplus_{g\in G}fR_gf.$
\begin{definition}
Let $\mathcal{P}$ be a property that a homogeneous element of a $G$-graded ring 
$R$ can satisfy. We say that $\mathcal{P}$ is a \emph{graded ABAB-compatible
property} if it satisfies the following conditions:
\begin{itemize}
\item[$i)$] If $a$ is a homogeneous element of $R$ which has property 
$\mathcal{P},$ then $-a$ also has property $\mathcal{P};$
\item[$ii)$] If a homogeneous element $a\in R$ has property $\mathcal{P}$ and
$f$ is a homogeneous idempotent element of $R$ such that $af=fa,$ then
$faf\in fRf$ has property $\mathcal{P};$
\item[$iii)$] If $a$ is a homogeneous element of $R$ and $f$ a homogeneous 
idempotent element of $R$ such that $af=fa,$ and if the elements $faf\in fRf$ and
$(1-f)a(1-f)\in(1-f)R(1-f)$ both have property $\mathcal{P}$ as elements of
the respective graded corner rings, then $a$ has property $\mathcal{P}$ in $R.$ 
\end{itemize}
A $G$-graded ring is said to be \emph{graded $\mathcal{P}$-clean} if every of its 
homogeneous elements can be written as a sum of a homogeneous idempotent
element and a homogeneous element with graded ABAB-compatible property 
$\mathcal{P}.$
\end{definition}
\indent The following lemma is a graded analogue of Lemma~2.8 in
\cite{ajd}. As in \cite{ajd}, it gives us a characterization of a graded strongly
nil clean element since ``being a homogeneous nilpotent'' is a graded 
ABAB-compatible property.
\begin{lemma}
Let $R$ be a $G$-graded ring and $M$ a $G$-graded right $R$-module. 
Also, let $\mathcal{P}$ be a graded ABAB-compatible property. An 
endomorphism $\varphi\in\mathrm{END}_R(M)_e$ is then the sum of a
homogeneous idempotent element $\epsilon$ and a homogeneous element $\alpha$
which has property $\mathcal{P}$ such that $\alpha\epsilon=\epsilon\alpha$ if
and only if there exists a direct sum decomposition $M=A\oplus B,$ where
$A, B\in R-gr,$ such that $\varphi|_A$ is an element of $\mathrm{END}_R(A)$
with property $\mathcal{P}$ and $(1-\varphi)|_B$ is an element of 
$\mathrm{END}_R(B)$ with property $\mathcal{P}.$ In case 
$\varphi\in\mathrm{END}_R(M)_g$ has property $\mathcal{P},$ where $g\neq e,$
the decomposition of $M$ is trivial.
\end{lemma}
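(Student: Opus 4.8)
The plan is to transcribe the ungraded argument of Lemma~2.8 in \cite{ajd} into the graded setting, the only genuine work being to check that every submodule, idempotent and endomorphism produced is homogeneous and to keep track of degrees. Two standard facts will be used repeatedly. First, a nonzero homogeneous idempotent of a $G$-graded ring lies in the $e$-component, since if $\epsilon=\epsilon^2$ is homogeneous of degree $h$ then $h=h^2=e$. Second, if $M=A\oplus B$ is a decomposition of $M\in R-gr$ into homogeneous submodules and $\pi\in\mathrm{END}_R(M)_e$ is the projection onto $B$ along $A$, then $\pi$ is a homogeneous idempotent, $1-\pi$ is the projection onto $A$, and restriction induces graded ring isomorphisms $\pi\,\mathrm{END}_R(M)\,\pi\cong\mathrm{END}_R(B)$ and $(1-\pi)\,\mathrm{END}_R(M)\,(1-\pi)\cong\mathrm{END}_R(A)$. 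I will use these to read the graded corner rings of $\mathrm{END}_R(M)$ as the graded endomorphism rings of the summands, and conversely; note also that if $\varphi\in\mathrm{END}_R(M)_e$ preserves $A$ and $B$ then $\varphi|_A$ and $\varphi|_B$ are homogeneous of degree $e$, so property $\mathcal{P}$ makes sense for them.

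For the forward implication, write $\varphi=\epsilon+\alpha$ with $\epsilon$ a homogeneous idempotent, $\alpha$ homogeneous with property $\mathcal{P}$, and $\epsilon\alpha=\alpha\epsilon$. If $\epsilon=0$ take $A=M$, $B=0$: then $\varphi|_A=\alpha$ has $\mathcal{P}$, while $(1-\varphi)|_B$ is the zero element of the zero ring $\mathrm{END}_R(0)$, which has $\mathcal{P}$ (apply condition $ii)$ with $f=0$). If $\epsilon\neq0$ then $\epsilon\in\mathrm{END}_R(M)_e$, so $A:=(1-\epsilon)M$ and $B:=\epsilon M$ are homogeneous submodules with $M=A\oplus B$, and $\epsilon$ is the projection onto $B$ along $A$; since $\epsilon$ commutes with $\alpha$, both $A$ and $B$ are invariant under $\epsilon$, $\alpha$, and hence $\varphi$. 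On $A$ one has $\varphi|_A=\alpha|_A$, which under the identification $\mathrm{END}_R(A)\cong(1-\epsilon)\mathrm{END}_R(M)(1-\epsilon)$ is $(1-\epsilon)\alpha(1-\epsilon)$; this has $\mathcal{P}$ by condition $ii)$ applied to the idempotent $1-\epsilon$. On $B$ one has $(1-\varphi)|_B=-\alpha|_B$, which under $\mathrm{END}_R(B)\cong\epsilon\,\mathrm{END}_R(M)\,\epsilon$ is $\epsilon(-\alpha)\epsilon$; this has $\mathcal{P}$ by condition $i)$ followed by condition $ii)$ applied to $\epsilon$.

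For the converse, let $M=A\oplus B$ be a homogeneous decomposition with $\varphi(A)\subseteq A$ and $\varphi(B)\subseteq B$ — which is exactly the content of the hypotheses $\varphi|_A\in\mathrm{END}_R(A)$ and $(1-\varphi)|_B\in\mathrm{END}_R(B)$ — and with $\varphi|_A$ and $(1-\varphi)|_B$ having property $\mathcal{P}$. Let $\epsilon\in\mathrm{END}_R(M)_e$ be the projection onto $B$ along $A$, a homogeneous idempotent, and put $\alpha:=\varphi-\epsilon\in\mathrm{END}_R(M)_e$. Since $A$ and $B$ are invariant under both $\varphi$ and $\epsilon$, we get $\varphi\epsilon=\epsilon\varphi$, hence $\alpha\epsilon=\epsilon\alpha$. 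Moreover $(1-\epsilon)\alpha(1-\epsilon)$ is identified with $\varphi|_A\in\mathrm{END}_R(A)$, which has $\mathcal{P}$, and $\epsilon\alpha\epsilon$ is identified with $\varphi|_B-\mathrm{id}_B=-(1-\varphi)|_B\in\mathrm{END}_R(B)$, which has $\mathcal{P}$ by condition $i)$. Condition $iii)$ applied to the idempotent $\epsilon$ then gives that $\alpha$ has $\mathcal{P}$ in $\mathrm{END}_R(M)$, so $\varphi=\epsilon+\alpha$ is the required decomposition.

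Finally, if $\varphi\in\mathrm{END}_R(M)_g$ with $g\neq e$ and $\varphi=\epsilon+\alpha$ is any decomposition as above with $\varphi\neq0$, then comparing homogeneous components forces $\epsilon=0$: otherwise $\epsilon$ would be a nonzero homogeneous idempotent, hence of degree $e$, and homogeneity of $\varphi=\epsilon+\alpha$ together with $g\neq e$ would be impossible. Thus $\varphi=\alpha$ has $\mathcal{P}$ and the decomposition of $M$ obtained above is the trivial one $A=M$, $B=0$. The step requiring most care is getting the graded corner-ring identifications right and, when invoking $i)$–$iii)$, pairing each corner with the correct idempotent ($\epsilon$ or $1-\epsilon$) and the correct sign; the remainder is routine bookkeeping with degrees.
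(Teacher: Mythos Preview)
Your proof is correct and follows essentially the same approach as the paper's, which also transcribes Lemma~2.8 of \cite{ajd} into the graded setting by taking $A=(1-\epsilon)M$, $B=\epsilon M$ in one direction and letting $\epsilon$ be the projection onto $B$ with kernel $A$ in the other. The paper's version is terser, simply citing \cite{ajd} for the verification that $\varphi|_A$, $(1-\varphi)|_B$, and $\alpha$ have the required property, whereas you spell out explicitly how conditions $i)$--$iii)$ of the graded ABAB-compatibility are invoked at each step; the added detail is accurate and the identifications of the corner rings with $\mathrm{END}_R(A)$ and $\mathrm{END}_R(B)$ are handled correctly.
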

\begin{proof}
Let $\varphi=\epsilon+\alpha,$ with an idempotent element $\epsilon$ and an
element $\alpha$ with property $\mathcal{P},$ where
$\epsilon, \alpha\in\mathrm{END}_R(M)_e,$ and also, let us assume that 
$\epsilon\alpha=\alpha\epsilon.$ If $1$
denotes the identical mapping, then $1-\epsilon\in\mathrm{END}_R(M)_e,$ and let
us set $A=(1-\epsilon)(M)$ and $B=\epsilon(M).$ Then $A, B\in R-gr$ and 
$M=A\oplus B.$ Now, like in the proof of Lemma~2.8 in 
\cite{ajd}, one may verify that $\varphi|_A$ and $(1-\varphi)|_B$ have the 
desired properties.\\
\indent In the other direction, let $\epsilon$ be the
projection onto $B$ with kernel $A.$ Then $\epsilon\in\mathrm{END}_R(M)_e$ and
the rest goes as in the proof of Lemma~2.8 in \cite{ajd}.\\
\indent The second assertion is obvious. 
\end{proof}
\begin{corollary}
Let $R$ be a $G$-graded ring and $M\in R-gr.$ An element 
$\varphi\in\mathrm{END}_R(M)_e$ is graded strongly nil clean if and only if there
exists a direct sum decomposition $M=A\oplus B,$ where $A, B\in R-gr,$ such that
$A$ and $B$ are $\varphi$-invariant and such that 
$\varphi|_A\in\mathrm{END}_R(A)$ and $(1-\varphi)|_B\in\mathrm{END}_R(B)$
are nilpotent. In case $\varphi\in\mathrm{END}_R(M)_g$ is nilpotent, where 
$g\neq e,$ the decomposition of $M$ is trivial.
\end{corollary}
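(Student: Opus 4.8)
The plan is to obtain the Corollary as the specialization of the preceding Lemma to the property $\mathcal{P}=$ ``being a homogeneous nilpotent element.'' The preliminary task, already flagged in the text, is to check that this $\mathcal{P}$ is graded ABAB-compatible; granting that, both assertions of the Corollary become verbatim instances of the two assertions of the Lemma, and nothing further is needed.

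First I would run through the three defining conditions. Condition $i)$ is immediate: if $a$ is homogeneous with $a^n=0$, then $-a$ is homogeneous and $(-a)^n=\pm a^n=0$. For $ii)$, suppose $a$ is a homogeneous nilpotent, $f$ a homogeneous idempotent, and $af=fa$; then $faf$ is homogeneous as a product of homogeneous elements, and $af=fa$ together with $f^2=f$ gives $faf=fa=af$, so $(faf)^n=f^na^n=fa^n=0$ for $n$ large, and $faf$ is nilpotent in $fRf$ (nilpotence is independent of the ambient identity). For $iii)$, suppose $a$ is homogeneous, $f$ a homogeneous idempotent, $af=fa$, and $faf$, $(1-f)a(1-f)$ are both nilpotent; from $af=fa$ and $f^2=f$ one computes $fa(1-f)=0=(1-f)af$, hence $a=faf+(1-f)a(1-f)$, and the two summands are orthogonal (each product carries an $f(1-f)$ or $(1-f)f$ in the middle), so a sufficiently high power of $a$ is the sum of high powers of the two summands, hence $0$; thus $a$ is a homogeneous nilpotent.

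Having established this, I would invoke the Lemma: $\varphi\in\mathrm{END}_R(M)_e$ is graded strongly nil clean exactly when it is $\epsilon+\alpha$ with $\epsilon$ a homogeneous idempotent, $\alpha$ a homogeneous nilpotent, and $\epsilon\alpha=\alpha\epsilon$, which by the Lemma is equivalent to the existence of a decomposition $M=A\oplus B$ with $A,B\in R-gr$ such that $\varphi|_A\in\mathrm{END}_R(A)$ has property $\mathcal{P}$, i.e. is nilpotent, and $(1-\varphi)|_B\in\mathrm{END}_R(B)$ is nilpotent; the clause that $\varphi|_A$ and $(1-\varphi)|_B$ be honest endomorphisms of $A$ and $B$ is precisely the statement that $A$ and $B$ are $\varphi$-invariant (observe $B$ is $\varphi$-invariant iff it is $(1-\varphi)$-invariant). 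The second assertion is the corresponding clause of the Lemma applied with $g\neq e$. The only step requiring genuine care is condition $iii)$ — the identities $fa(1-f)=0=(1-f)af$ and the orthogonality of the corner components — but this is routine given $af=fa$ and $f^2=f$; the homogeneity bookkeeping is automatic because $f\in R_e$ and products of homogeneous elements are homogeneous.
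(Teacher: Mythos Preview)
Your proposal is correct and follows exactly the approach intended by the paper: the Corollary is the specialization of the preceding Lemma to the graded ABAB-compatible property ``being a homogeneous nilpotent,'' a fact already noted in the text immediately before the Lemma. Your explicit verification of conditions $i)$--$iii)$ and the observation that $\varphi|_A\in\mathrm{END}_R(A)$, $(1-\varphi)|_B\in\mathrm{END}_R(B)$ encode precisely the $\varphi$-invariance of $A$ and $B$ simply fill in details the paper leaves implicit.
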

\indent Of course, as ``being a unit'' is ABAB-compatible property (see
\cite{ajd}), ``being a homogeneous unit'' is a graded ABAB-compatible property.
Therefore, we also introduce the following notion.
\begin{definition}
A homogeneous element of a $G$-graded ring $R$ is said to be \emph{graded clean} 
if it can be written as a sum of a homogeneous idempotent and a homogeneous unit. 
A $G$-graded ring is said to be \emph{graded clean} if every of its homogeneous 
elements is graded clean.
\end{definition}
\begin{remark}
If $R=\bigoplus_{g\in G}R_g$ is a graded clean ring, then we obviously have that
$R_e$ is a clean ring, and that every nonzero homogeneous element not coming
from $R_e$ is a unit. Also, unlike the classical case (see \cite{ajd}), a graded nil
clean ring does not have to be a graded clean ring. Namely, it is enough to look at
Example~\ref{example}. Obviously, every graded division ring, that is, a graded ring
in which every homogeneous element is invertible, is graded clean.
\end{remark}
\begin{remark}
We believe that this is the appropriate place to put some ideas about some other
notions one might want to study. Inspired by the notion of a 2-clean element from
\cite{xt}, if $R$ is a $G$-graded ring,
we may define an element $a\in R$ to be \emph{graded 2-nil-clean} if it can be
written as a sum of a homogeneous idempotent element and two homogeneous
nilpotent elements. Notice that we do not require a graded 2-nil-clean element to be
homogeneous. A $G$-graded ring $R$ is then said to be \emph{graded 2-nil-clean} if
every of its elements is graded 2-nil-clean. As an example, we have a 
$\mathbb{Z}$-graded ring $R=M_2(S),$
with $R_0=\left(%
\begin{array}{cc}
  S & 0 \\
  0 & S \\
\end{array}%
\right),$ $R_1=\left(%
\begin{array}{cc}
  0 & S \\
  0 & 0 \\
\end{array}%
\right),$ $R_{-1}=\left(%
\begin{array}{cc}
  0 & 0 \\
  S & 0 \\
\end{array}%
\right),$ $R_i=\left(%
\begin{array}{cc}
  0 & 0 \\
  0 & 0 \\
\end{array}%
\right)$ for $i\in\mathbb{Z}\setminus\{-1,0,1\},$ where $S$ is a Boolean
ring.\\
\indent Also, in definition of a graded nil clean element one may also discard 
the assumption
of being a homogeneous element. That would also transfer to the notion of a
graded nil clean ring of course. If $S$ is a Boolean ring, and $C_2=\{e,g\}$ a cyclic
group of order two, one such example is a $C_2$-graded ring
$R=\left(%
\begin{array}{cc}
  S & S \\
  0 & S \\
\end{array}%
\right),$ with grading 
$R_e=\left(%
\begin{array}{cc}
  S & 0 \\
  0 & S \\
\end{array}%
\right),$
$R_g=\left(%
\begin{array}{cc}
  0 & S \\
  0 & 0 \\
\end{array}%
\right).$
\end{remark}
\indent The next result represents a graded version of Proposition~3.15 in
\cite{ajd}.
\begin{lemma}\label{lemma2}
Let $R=\bigoplus_{g\in G}R_g$ be a $G$-graded ring and $I$ a homogeneous ideal
of $R$ which is graded-nil. Then $R$ is graded nil clean if and only if $R/I$ is
graded nil clean.
\end{lemma}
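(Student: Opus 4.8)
The plan is to prove the two implications separately; the forward direction is essentially formal, while the reverse one reduces to the classical ungraded statement, namely Proposition~3.15 of \cite{ajd}, applied to the neutral component. For the forward direction I would use that the canonical projection $\pi\colon R\to R/I$ is a surjective degree-preserving ring homomorphism, hence carries homogeneous idempotents to homogeneous idempotents and homogeneous nilpotents to homogeneous nilpotents. Since $(R/I)_g=R_g/(I\cap R_g)$, every homogeneous element $y\in(R/I)_g$ is $\pi(x)$ for some $x\in R_g$; writing $x=f+b$ with $f$ a homogeneous idempotent and $b$ a homogeneous nilpotent of $R$ and applying $\pi$ gives $y=\pi(f)+\pi(b)$, a graded nil clean decomposition in $R/I$. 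No commutativity considerations intervene since the statement concerns graded nil clean, not graded strongly nil clean, rings.

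For the reverse direction, first note that $I\cap R_e$ is a two-sided ideal of the ring $R_e$ (which has identity $1\in R_e$) consisting of homogeneous elements of $I$, each nilpotent because $I$ is \emph{graded-nil}; thus $I\cap R_e$ is a nil ideal of $R_e$. Since $(R/I)_e=R_e/(I\cap R_e)$ is a nil clean ring by Remark~\ref{remark} applied to the graded nil clean ring $R/I$, Proposition~3.15 of \cite{ajd} yields that $R_e$ itself is nil clean. It then remains to deal with homogeneous elements $x\in R_g$ with $g\neq e$: if $x\in I$, then $x$ is nilpotent and $x=0+x$ works; if $x\notin I$, then $\overline{x}\neq0$ is a homogeneous element of $R/I$ of degree $g\neq e$, so by Remark~\ref{remark} it is nilpotent, i.e. $x^n\in I$ for some $n$, and since $x^n$ is homogeneous and $I$ is graded-nil, $x^n$, and hence $x$, is nilpotent, so again $x=0+x$ is a graded nil clean decomposition. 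For $x\in R_e$ one uses a nil clean decomposition of $x$ inside $R_e$, whose two summands are automatically homogeneous of degree $e$. Combining these cases shows $R$ is graded nil clean.

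I do not expect a genuine obstacle here once the right dictionary is in place; the only points needing care are that a nonzero homogeneous idempotent $f$ necessarily lies in $R_e$ (immediate, since $f=f^2\in R_{g^2}$ forces $g=e$), and that it is precisely graded-nilness of $I$ — rather than mere nilness — that powers the $g\neq e$ case, where one only obtains $x^n\in I$ with $x^n$ homogeneous and must still conclude that $x^n$ is nilpotent. As an alternative, one could avoid citing \cite{ajd} and argue uniformly: lift the homogeneous idempotent $\overline{f}\in(R/I)_e$ to a homogeneous idempotent $f\in R_e$ along the nil ideal $I\cap R_e$ (idempotents lift modulo nil ideals), and then observe that when $g=e$ the element $x-f\in R_e$ has nilpotent image, hence is itself nilpotent, giving $x=f+(x-f)$; the two routes are interchangeable, and invoking Proposition~3.15 of \cite{ajd} is the shorter one.
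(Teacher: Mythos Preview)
Your proof is correct and follows essentially the same approach as the paper: the forward direction via the degree-preserving projection, and the reverse direction by reducing the $R_e$ case to Proposition~3.15 of \cite{ajd} (applied with the nil ideal $I\cap R_e$) and handling $g\neq e$ via graded-nilness of $I$. The only cosmetic difference is that the paper spells out the idempotent-lifting step explicitly rather than citing Proposition~3.15 as a black box---which is precisely your ``alternative'' route.
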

\begin{proof}
Suppose that $R$ is graded nil clean. Then $R/I$ is a homomorphic image of $R$
by a degree-preserving homomorphism, and therefore $R/I$ is graded nil clean.\\
\indent Now, let $R/I$ be a graded nil clean ring. We need to show that $R_e$ is 
a nil clean
ring and that every homogeneous element not coming from $R_e$ is nilpotent. We
follow the proof of Proposition~3.15 in \cite{ajd} in order to prove that $R_e$ is nil
clean. So, let us assume that $r\in R_e.$ Since $\bar{R}=R/I$ is graded nil clean, 
$\bar{r}\in(R/I)_e=R_e/I\cap R_e$ can be written as a sum of an idempotent 
element $\bar{f}\in(R/I)_e$ and a nilpotent element $\bar{b}\in(R/I)_e.$ Now, if we
apply Proposition~27.1 in \cite{af} (see also \cite{tyl}) to $R_e,$ we see that a
homogeneous idempotent element modulo a homogeneous ideal which is graded-nil
can be lifted to a homogeneous idempotent of $R.$ Hence, 
$\bar{f}$ can be lifted to a homogeneous idempotent $f\in R_e.$ Now, 
$\overline{r-f}$ is a nilpotent element from $(R/I)_e.$ Since $I$ is graded-nil, it
follows that $r-f$ is a nilpotent element in $R_e.$ Therefore, $R_e$ is nil clean. Now
assume that $r\in R_g,$ where $g\neq e.$ Then $\bar{r}\in(R/I)_g$ is nilpotent
since $R/I$ is graded nil clean. According to assumption, $I$ is graded-nil, and so, 
$r$ is nilpotent, which completes the proof. 
\end{proof}
\indent It is proved in \cite{ajd} that a ring $R$ is nil clean if and only if 
$J(R)$ is nil and $R/J(R)$ is nil clean. We establish a similar structure result 
for $G$-graded rings.
\begin{lemma}
Let $R=\bigoplus_{g\in G}R_g$ be a $G$-graded ring which is graded nil clean. If 
$G$ is finite, then $J^g(R)$ is graded-nil.
\end{lemma}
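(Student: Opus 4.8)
The plan is to use that $J^g(R)$ is a homogeneous two-sided ideal — it is the intersection of the maximal homogeneous right ideals recalled above, and such an intersection is homogeneous — so that ``$J^g(R)$ is graded-nil'' means exactly that every homogeneous element of $J^g(R)$ is nilpotent, and this can be checked degree by degree. For a homogeneous $x\in J^g(R)\cap R_g$ with $g\neq e$ there is nothing to prove: by Remark~\ref{remark} every homogeneous element of $R$ lying outside $R_e$ is already nilpotent. (This is where finiteness of $G$ can be brought in, if one prefers: since $g^{|G|}=e$, the element $x^{|G|}$ lies in $J^g(R)\cap R_e$, so it would even suffice to treat the neutral component.) Hence the whole statement reduces to showing that the ideal $J^g(R)\cap R_e$ of $R_e$ is nil. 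Now by Remark~\ref{remark} the ring $R_e$ is nil clean, so by the result of \cite{ajd} recalled above $J(R_e)$ is nil; it therefore suffices to prove the inclusion $J^g(R)\cap R_e\subseteq J(R_e)$.

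For that inclusion I would argue module-theoretically. Fix $a\in J^g(R)\cap R_e$ and an arbitrary simple right $R_e$-module $S$; the aim is $Sa=0$. Form the induced graded right $R$-module $M=S\otimes_{R_e}R$, graded by $M_x=S\otimes_{R_e}R_x$; it is generated by its neutral component $M_e=S\otimes_{R_e}R_e\cong S$. By Zorn's lemma pick a graded submodule $N\leq M$ maximal with respect to $N\cap M_e=0$. Since $M$ is generated by $M_e$ and $M_e$ is simple over $R_e$, any graded submodule strictly larger than $N$ must contain $M_e$, hence equals $M$; thus $T:=M/N$ is graded simple and $T_e=M_e/(N\cap M_e)\cong S$ as right $R_e$-modules. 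Because $a\in J^g(R)$ annihilates every graded simple module, $Ta=0$, and restricting this to the neutral component $T_e$, which is stable under $R_e$, gives $Sa=T_ea=0$. As $S$ was arbitrary, $a\in J(R_e)$, completing the reduction.

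The one genuinely delicate step is this last one, the inclusion $J^g(R)\cap R_e\subseteq J(R_e)$: the difficulty is that $J^g(R)$ is detected only by graded simple $R$-modules, whose neutral components need not be simple — or even nonzero — over $R_e$, so one has to manufacture a graded simple module whose neutral part is exactly a prescribed simple $R_e$-module, which is what the induced module cut down by a suitably maximal graded submodule achieves. Everything else is routine: the direct-sum decomposition $M=\bigoplus_x S\otimes_{R_e}R_x$ of the grading on the induced module, the fact that the intersection of a graded $R$-submodule with $M_e$ is an $R_e$-submodule, and — should one wish to avoid invoking ``$J(R_e)$ is nil'' — the observation that writing an element of $J^g(R)\cap R_e$ as an idempotent plus a nilpotent (possible since $R_e$ is nil clean) and reducing modulo $J(R_e)$ forces the idempotent into $J(R_e)$, hence to $0$.
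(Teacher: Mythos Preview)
Your proof is correct and follows the same overall skeleton as the paper's: reduce to showing that $J^g(R)\cap R_e$ is nil, use Remark~\ref{remark} to see that $R_e$ is nil clean, invoke \cite{ajd} to get $J(R_e)$ nil, and then establish $J^g(R)\cap R_e\subseteq J(R_e)$. The difference lies entirely in how this last inclusion is obtained. The paper simply quotes Corollary~4.2 of Cohen--Montgomery \cite{cm}, which for finite $G$ gives the \emph{equality} $J(R_e)=J^g(R)\cap R_e$; you instead give a direct module-theoretic argument, inducing a simple $R_e$-module up to a graded $R$-module and cutting down by a maximal graded submodule missing the neutral component to produce a graded simple module with prescribed $e$-part.

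What your approach buys is twofold. First, it is self-contained: you do not need to import the Cohen--Montgomery machinery. Second, and more interestingly, your argument nowhere uses the finiteness of $G$ --- the inclusion $J^g(R)\cap R_e\subseteq J(R_e)$ that you prove holds for an arbitrary grading group, and since the case $g\neq e$ is already handled by Remark~\ref{remark} without any hypothesis on $G$, your proof actually removes the finiteness assumption from the lemma entirely. The paper's route, by contrast, is shorter on the page (one citation in place of a paragraph) but genuinely needs $|G|<\infty$ because that is where Cohen--Montgomery's equality lives. Your parenthetical about $x^{|G|}\in R_e$ is therefore unnecessary and could be dropped.
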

\begin{proof}
Since $R$ is graded nil clean, we have that $R_e$ is nil clean. According to
Proposition~3.16 in \cite{ajd} applied to the ring $R_e,$ we have that 
$J(R_e)$ is nil. Now, Corollary 4.2 in \cite{cm} implies that
$J(R_e)=J^g(R)\cap R_e.$ Therefore, if $a$ is an element of the $e$-th component
of $J^g(R),$ it is nilpotent. On the other hand, if $a\in J^g(R)_g,$ where $g\neq e,$
then $a\in R_g,$ and hence $a$ is nilpotent. Therefore $J^g(R)$ is graded-nil. 
\end{proof}
\begin{corollary}
Let $R$ be a $G$-graded ring and $G$ finite. Then $R$ is graded nil clean if and only
if $J^g(R)$ is graded-nil and $R/J^g(R)$ is graded nil clean.
\end{corollary}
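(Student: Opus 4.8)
The plan is to obtain this as a straightforward consequence of the two preceding results, once one records the standard fact recalled in Section~2 that the graded Jacobson radical $J^g(R)$ is a homogeneous two-sided ideal of $R$ (it is the intersection of the maximal homogeneous right ideals and is left-right symmetric). So $J^g(R)$ is always an admissible choice of $I$ in Lemma~\ref{lemma2}, and the only extra input needed is its graded-nilness.

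For the ``only if'' direction, suppose $R$ is graded nil clean. Since $G$ is finite, the lemma immediately preceding this corollary applies and gives that $J^g(R)$ is graded-nil. Then $J^g(R)$ is a homogeneous ideal which is graded-nil, so Lemma~\ref{lemma2}, applied with $I=J^g(R)$, yields that $R/J^g(R)$ is graded nil clean. This establishes both asserted properties. For the ``if'' direction, assume $J^g(R)$ is graded-nil and $R/J^g(R)$ is graded nil clean; since $J^g(R)$ is a homogeneous ideal which is graded-nil, Lemma~\ref{lemma2} applies directly and gives that $R$ is graded nil clean. I would remark that the finiteness of $G$ is used only in the forward direction, via the preceding lemma (which in turn relies on Corollary~4.2 of \cite{cm} relating $J(R_e)$ and $J^g(R)\cap R_e$), and is not needed for the converse.

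I do not expect any real obstacle here: this is a pure ``plug the lemmas together'' argument, and the only point that must not be skipped is checking that $J^g(R)$ is genuinely a homogeneous two-sided ideal so that Lemma~\ref{lemma2} is legitimately applicable. If a more self-contained treatment were desired, one could avoid citing Lemma~\ref{lemma2} and argue directly that $(J^g(R))_e\subseteq J(R_e)$ is nil while homogeneous elements of $J^g(R)$ of nonidentity degree are nilpotent, but routing everything through Lemma~\ref{lemma2} is the cleanest presentation.
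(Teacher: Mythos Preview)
Your proposal is correct and matches the paper's approach exactly: the corollary is stated without proof as an immediate consequence of the preceding lemma (giving graded-nilness of $J^g(R)$ when $G$ is finite) together with Lemma~\ref{lemma2} applied with $I=J^g(R)$. Your observation that finiteness of $G$ is only needed for the forward implication is precisely the content of the paper's subsequent Remark.
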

\begin{remark}
Of course, by Lemma~\ref{lemma2}, if $J^g(R)$ is graded-nil and $R/J^g(R)$ is
graded nil clean, then $R$ is graded nil clean for any cardinality of $G.$
\end{remark}
\indent We would also like to have a graded analogue of Corollary~3.22 in
\cite{ajd}. In order to obtain it, let us introduce the notion of a \emph{graded
strongly $\pi$-regular element} and inspect a relationship between such an element
and a graded strongly nil clean element.
\begin{definition}
A homogeneous element $a$ of a $G$-graded ring is said to be
\emph{graded strongly $\pi$-regular} if it can be written as a sum of a
homogeneous idempotent element $f$ and a homogeneous
unit $u$ such that $fa=af$ and $faf$ is nilpotent.
\end{definition}
\indent Notice that, as in the classical case, the uniqueness of a graded strongly 
$\pi$-regular decomposition holds in a $G$-graded ring 
$R=\bigoplus_{g\in G}R_g.$
First assume that a homogeneous element $a$ is a unit. Then the assertion is clear.
Now assume that $a\in R_e$ and $f+u$ and $f'+v$ are both strongly $\pi$-regular
decompositions of $a.$ It clearly suffices to show that $f = f'.$ However, this 
follows directly from Proposition~2.6 in \cite{ajd} applied to $R_e.$\\
\indent Of course, every
graded strongly nil clean element is strongly nil clean element and hence a 
strongly $\pi$-regular element according to Proposition~3.5 in \cite{ajd}.
\begin{lemma}\label{nilpotent}
Let $R=\bigoplus_{g\in G}R_g$ be a $G$-graded ring and let $a\in R$ be a graded
strongly $\pi$-regular element with graded strongly $\pi$-regular decomposition
$a=f+u.$ Then $a$ is graded strongly nil clean element if and only if $2f-1+u$ is
nilpotent and $u\in R_e.$ 
\end{lemma}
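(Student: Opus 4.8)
The plan is to prove both implications by elementary manipulations with $f$ and $u$, exploiting two facts. First, the identity $fu=uf$ is automatic: since $u=a-f$ we get $fu=fa-f=af-f=uf$ from $fa=af$. Second, under the assignment $\epsilon=1-f$, $b=2f-1+u$ the element $2f-1+u$ is precisely the would-be nilpotent summand of a strongly nil clean decomposition of $a$, because $(1-f)+(2f-1+u)=f+u=a$. I would also begin by recording the reduction that a graded strongly $\pi$-regular element $a=f+u$ is either a homogeneous unit — the case $f=0$, in which uniqueness of the decomposition gives $u=a$ — or lies in $R_e$ — the case $f\neq0$, where $f\in R_e$, and comparing homogeneous components of $a=f+u$ (a nonzero homogeneous idempotent plus a nonzero homogeneous unit) forces $u\in R_e$ and hence $a\in R_e$. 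These two cases are then handled separately.

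In the unit case we have $f=0$ and $u=a$, so the assertion becomes: $a$ is graded strongly nil clean if and only if $a-1$ is nilpotent and $a\in R_e$. One direction is immediate from $a=1+(a-1)$. For the other, suppose $a=\epsilon+b$ is a graded strongly nil clean decomposition; then $(1-\epsilon)a=(1-\epsilon)b$, and since $a$ is a unit and $1-\epsilon$ commutes with $b$ and with $a$, the idempotent $1-\epsilon$ coincides with the nilpotent $(1-\epsilon)a^{-1}b$ and so must vanish. Hence $\epsilon=1$ and $b=a-1$ is nilpotent, and homogeneity of $b=a-1$ together with $1\in R_e$ forces $a\in R_e$. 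In particular, when $a$ is a homogeneous unit of degree $\neq e$ both sides of the equivalence are false (the left because a graded strongly nil clean unit must lie in $R_e$, the right because $u=a\notin R_e$).

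In the case $a\in R_e$ all homogeneity issues disappear, since then $f,u\in R_e$ and, in any graded strongly nil clean decomposition $a=\epsilon+b$, also $\epsilon,b\in R_e$. For the converse direction I would set $\epsilon:=1-f$ and $b:=2f-1+u$: then $\epsilon$ is a homogeneous idempotent, $b$ is a homogeneous nilpotent by hypothesis, $\epsilon+b=a$, and $\epsilon b=b\epsilon$ follows from $fu=uf$; so $a$ is graded strongly nil clean. For the forward direction, given $a=\epsilon+b$ graded strongly nil clean I would check that $a=(1-\epsilon)+(2\epsilon-1+b)$ is a graded strongly $\pi$-regular decomposition: $1-\epsilon$ is a homogeneous idempotent; $2\epsilon-1$ is a unit with $(2\epsilon-1)^2=1$ which commutes with the nilpotent $b$, so $2\epsilon-1+b$ is a homogeneous unit; $(1-\epsilon)a=a(1-\epsilon)=(1-\epsilon)b$; and $(1-\epsilon)a(1-\epsilon)=(1-\epsilon)b$ is nilpotent. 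By the uniqueness of the graded strongly $\pi$-regular decomposition, established in the discussion preceding the statement, this yields $f=1-\epsilon$ and $u=2\epsilon-1+b\in R_e$, whence $2f-1+u=b$ is nilpotent.

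The main obstacle is not any single computation but the bookkeeping at the interface between the idempotent/nilpotent algebra and the grading: in the forward direction one must be sure that the strongly $\pi$-regular decomposition produced from a nil clean decomposition has homogeneous pieces, which is exactly why the hypothesis $u\in R_e$ must be present and why the homogeneous units of degree $\neq e$ have to be isolated and shown to make both sides of the equivalence fail. Once this case analysis is in place, everything else rests on the already-established uniqueness together with the routine facts that $(2\epsilon-1)^2=1$ and that the sum of a unit and a commuting nilpotent is again a unit.
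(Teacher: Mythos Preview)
Your proof is correct and follows essentially the same strategy as the paper's: reduce to the case $a\in R_e$ (by showing that a homogeneous unit of degree $\neq e$ makes both sides of the equivalence fail), and there invoke the uniqueness of the strongly $\pi$-regular decomposition together with the identity $a=(1-\epsilon)+(2\epsilon-1+b)$. The only difference is cosmetic: the paper cites Propositions~2.6, 3.5 and~3.9 of \cite{ajd} for the ungraded computations in $R_e$, whereas you carry them out explicitly and organize the case split around $f=0$ versus $f\neq0$ rather than around the degree of $a$.
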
 
\begin{proof}
Let $a$ be a graded strongly nil clean element. If $0\neq a\in R_g,$ where 
$g\neq e,$ then $f=0$ and $u=a$ is a unit. Since $a$ is by assumption graded
strongly nil clean element, we have that $u$ is also a nilpotent element, which is
impossible. Therefore, $a\in R_e.$ Then $a=f'+b$ for some 
idempotent element $f'\in R_e$ and a nilpotent element $b\in R_e$ which
commutes with $f'.$ Also, $a=(1-f')+(2f'-1+b)$ is a strongly $\pi$-regular
decomposition of $a$ in $R_e$ (see Proposition~3.5 in \cite{ajd}). Since strongly 
$\pi$-regular decomposition is unique according to Proposition~2.6 in \cite{ajd}, we
have that $f=1-f'$ and $u=2f'-1+b.$ However, we then have that $2f-1+u=b$ is a
nilpotent element, and, of course, $u\in R_e.$\\
\indent The converse follows from Proposition~3.9 in \cite{ajd} applied to the ring
$R_e.$ 
\end{proof}
\indent With this in mind, a graded version of Theorem~3.21 in \cite{ajd} can be
proved which implies a graded version of Corollary~3.22 in \cite{ajd}.
\begin{theorem}
Let $R=\bigoplus_{g\in G}R_g$ be a $G$-graded ring and $I$ a homogeneous
nilpotent ideal of $R.$ If $a$ is a homogeneous element of $R$ such that $\bar{a}$
is a graded strongly nil clean element in $\bar{R}=R/I,$ then $a$ is a graded 
strongly nil clean element in $R.$
\end{theorem}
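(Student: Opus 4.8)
The plan is to split on the degree of the homogeneous element $a$, since a homogeneous element behaves completely differently as a graded strongly nil clean element according to whether it lies in $R_e$ or not.

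First I would dispose of the case $a\in R_g$ with $g\neq e$. If $\bar a=0$ in $\bar R$, then $a\in I$ is nilpotent because $I$ is nilpotent, so $a=0+a$ is a graded strongly nil clean decomposition. If $\bar a\neq 0$, fix a graded strongly nil clean decomposition $\bar a=\bar f+\bar b$; a nonzero homogeneous idempotent of $\bar R$ must lie in $(\bar R)_e$, so comparing homogeneous components (recall $\bar a$ has degree $g\neq e$) forces $\bar f=0$, whence $\bar a=\bar b$ is nilpotent. Then $a^n\in I$ for some $n$, and nilpotence of $I$ gives that $a$ is nilpotent, so again $a=0+a$ works. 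This part uses only the elementary observations recorded in Remark~\ref{remark} together with nilpotence of $I$.

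The substantive case is $a\in R_e$, and here the idea is to transfer the whole problem to the ordinary ring $R_e$ with the ideal $I\cap R_e$. I would first note that $I\cap R_e$ is a nilpotent ideal of $R_e$ and that $R_e/(I\cap R_e)=(\bar R)_e$, and then check that the graded strongly nil clean decomposition $\bar a=\bar f+\bar b$ actually lives inside the neutral component: if $\bar f=0$ this is clear, and if $\bar f\neq0$ then $\bar f\in(\bar R)_e$, so $\bar b=\bar a-\bar f\in(\bar R)_e$ as well. Hence $\bar a$ is a strongly nil clean element of the ring $R_e/(I\cap R_e)$, and applying Theorem~3.21 of \cite{ajd} to $R_e$ and its nilpotent ideal $I\cap R_e$ yields $f,b\in R_e$ with $f$ idempotent, $b$ nilpotent, $fb=bf$ and $a=f+b$. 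Since $f$ and $b$ are homogeneous (of degree $e$), this is a graded strongly nil clean decomposition of $a$ in $R$, completing the proof.

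I expect the only delicate point to be this reduction to the neutral component, i.e.\ making sure the relevant decompositions genuinely sit in $R_e$ so that the non-graded Theorem~3.21 of \cite{ajd} applies verbatim. Alternatively, and this is presumably the reason Lemma~\ref{nilpotent} was developed, one can give a fully graded argument: pass from $\bar a$ being graded strongly nil clean to a graded strongly $\pi$-regular decomposition $\bar a=\bar f+\bar u$, lift it to a graded strongly $\pi$-regular decomposition $a=f+u$ of $a$ in $R$ modulo the nilpotent ideal $I$ (using uniqueness of the decomposition to identify its reduction with that of $\bar a$), and then invoke Lemma~\ref{nilpotent} in $\bar R$ to conclude that $\overline{2f-1+u}$ is nilpotent and $\bar u\in(\bar R)_e$; nilpotence of $I$ then makes $2f-1+u$ nilpotent and forces $u\in R_e$, so Lemma~\ref{nilpotent} applied in $R$ gives that $a$ is graded strongly nil clean.
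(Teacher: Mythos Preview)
Your proposal is correct. The case $g\neq e$ and the observation that the graded strongly nil clean decomposition of $\bar a$ lives entirely in $(\bar R)_e$ match the paper verbatim.

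For the case $a\in R_e$, your primary route---reduce to the ordinary ring $R_e$ with the nilpotent ideal $I\cap R_e$ and invoke Theorem~3.21 of \cite{ajd} as a black box---is a legitimate and in fact slightly shorter argument than the paper's. The paper instead takes the route you sketch as your alternative: it passes from the strongly nil clean decomposition $\bar a=\bar f+\bar b$ to the strongly $\pi$-regular decomposition $\overline{1-f}+\overline{2f-1+b}$, lifts that (following the \emph{proof} of Theorem~3.21 in \cite{ajd}) to a strongly $\pi$-regular decomposition $a=f'+u$ in $R_e$, and then uses its own Lemma~\ref{nilpotent} to conclude that $a$ is graded strongly nil clean by checking that $2f'-1+u$ is nilpotent (since $\overline{2f'-1+u}=\bar b$ is nilpotent and $I$ is nilpotent). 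So the paper unwinds Theorem~3.21 via the $\pi$-regular machinery it just set up, whereas your main argument simply cites the ungraded result after the reduction; the paper's version has the advantage of explicitly exercising Lemma~\ref{nilpotent}, while yours makes the dependence on \cite{ajd} cleaner.
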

\begin{proof}
First assume that $\bar{a}\in(R/I)_e.$ Since $\bar{a}$ is a graded strongly nil clean
element in $R/I,$ it can be  written as a sum of an idempotent element 
$\bar{f}\in(R/I)_e$ and a nilpotent element $\bar{b}\in(R/I)_e$ such that
$\bar{f}\,\bar{b}=\bar{b}\bar{f}.$ Every strongly nil clean element is strongly 
$\pi$-regular, and $\overline{1-f}+\overline{2f-1+b}$ is a strongly $\pi$-regular
decomposition of $\bar{a}$ in $(R/I)_e.$  Following the proof of Theorem~3.21 in
\cite{ajd}, there exists an idempotent $f'\in R_e$ and a unit $u\in R_e$ such that
$a=f'+u$ is a strongly $\pi$-regular decomposition of $a$ in $R_e.$ It is enough to
show that $2f'-1+u$ is nilpotent by Lemma~\ref{nilpotent}. By using equalities 
$\bar{f'}=\overline{1-f}$ and $\bar{u}=\overline{2f-1+b}$ one gets 
$\overline{2f' -1+u}=\bar{b}.$ Then $2f' -1+u$ is nilpotent by the nilpotency of 
$\bar{b}$ and $I.$ Now assume that $\bar{a}\in (R/I )_g,$ where $g\neq e.$ Then
$\bar{a}$ is a nilpotent element. But the assumption on $I$ implies that $a$ is
nilpotent in $R,$ which completes the proof.
\end{proof}
\begin{corollary}\label{corollary}
Let $R$ be a $G$-graded ring and $I$ a homogeneous nilpotent ideal of $R.$ Then
$R$ is graded strongly nil clean if and only if $R/I$ is graded strongly nil clean.
\end{corollary}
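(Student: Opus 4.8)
The plan is to deduce both implications essentially directly, with the backward direction being little more than a restatement of the foregoing Theorem. First I would dispose of the forward implication. Assume $R$ is graded strongly nil clean. Since $I$ is a homogeneous ideal, $R/I$ is a $G$-graded ring with components $(R/I)_g=R_g/(I\cap R_g)$, and the canonical projection $\pi\colon R\to R/I$ is a degree-preserving surjection. Given a homogeneous element $\bar a\in(R/I)_g$, choose a homogeneous preimage $a\in R_g$ and write $a=f+b$, where $f$ is a homogeneous idempotent, $b$ a homogeneous nilpotent, and $fb=bf$. Applying $\pi$ yields $\bar a=\bar f+\bar b$ with $\bar f$ a homogeneous idempotent, $\bar b$ a homogeneous nilpotent, and $\bar f\bar b=\bar b\bar f$, so $\bar a$ is graded strongly nil clean; hence so is $R/I$. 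Note that this half uses only that $I$ is homogeneous, not that it is nilpotent.

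For the converse, assume $R/I$ is graded strongly nil clean and let $a\in R_g$ be an arbitrary homogeneous element of $R$. Then $\bar a\in(R/I)_g$ is graded strongly nil clean, and the preceding Theorem, applied to the homogeneous nilpotent ideal $I$ and the homogeneous element $a$, gives at once that $a$ is graded strongly nil clean in $R$. As $a$ was arbitrary, $R$ is graded strongly nil clean, which finishes the proof.

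I do not expect a genuine obstacle here: both directions are short, and the only point requiring a moment's attention is the lifting of homogeneous elements of $R/I$ to homogeneous elements of $R$, which is immediate from the description of the components of $R/I$. All the substantive work — reducing via uniqueness of the strongly $\pi$-regular decomposition and Theorem~3.21 of \cite{ajd} applied to $R_e$, together with the nilpotency of $I$, to the degree-$e$ case — has already been carried out in the preceding Theorem, so the corollary is purely a matter of packaging.
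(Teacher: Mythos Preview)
Your proposal is correct and matches the paper's intended argument: the corollary is stated without proof precisely because the forward direction is the trivial observation that a degree-preserving homomorphic image of a graded strongly nil clean ring is graded strongly nil clean, and the backward direction is nothing more than applying the preceding Theorem to each homogeneous element. Your only added detail---making explicit the lifting of a homogeneous element of $R/I$ to one of $R$---is routine and handled exactly as one would expect.
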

\indent We now turn our attention to matrix rings.\\
\indent In \cite{hn} it is proved that a matrix ring over a clean ring is also clean.
We do not have such a result in the
$G$-graded setting. However, we have a graded version of Theorem in \cite{hn}.
\begin{theorem}\label{theoremm}
Let $R=\bigoplus_{g\in G}R_g$ be a $G$-graded ring and let $1=f_1+\dots+f_n$
in $R,$ where the $f_i$ are orthogonal idempotents from $R_e.$ If each 
$f_iRf_i$ is graded clean, and if $R$ has no nonzero homogeneous zero divisors, 
then $R$ is graded clean.
\end{theorem}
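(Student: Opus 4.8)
The plan is to mimic the Han--Nicholson argument from \cite{hn}, but carried out entirely inside the homogeneous part of $R$ and inside the graded corner rings. Recall first that a homogeneous element of a graded clean ring which does not lie in the neutral component is automatically a homogeneous unit, and a nonzero homogeneous idempotent lies in $R_e$; these facts (from the remarks above) will be used repeatedly without further comment. So it suffices to show that every element $a \in R_e$ can be written as $a = \varepsilon + u$ with $\varepsilon$ a homogeneous idempotent and $u$ a homogeneous unit; a nonzero homogeneous element outside $R_e$ is already a unit (since $R$ has no nonzero homogeneous zero divisors, and — as one checks — such an element must in fact be invertible under the hypotheses, e.g.\ because its left and right annihilators among homogeneous elements vanish and one can run the corner-ring argument on it too), and for it the clean decomposition $0 + a$ works.

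First I would set up the induction on $n$. For $n = 1$ there is nothing to prove since $R = f_1 R f_1$ is graded clean by hypothesis. For the inductive step, write $f = f_1 + \dots + f_{n-1}$ and $e' = f_n$, so $1 = f + e'$ with $f, e'$ orthogonal homogeneous idempotents in $R_e$. By the inductive hypothesis applied to the ring $fRf$ with its induced decomposition $f = f_1 + \dots + f_{n-1}$ (note $fRf$ inherits "no nonzero homogeneous zero divisors" from $R$, and each $f_i (fRf) f_i = f_i R f_i$ is graded clean), we get that $fRf$ is graded clean; and $e' R e' = f_n R f_n$ is graded clean by hypothesis. Thus I may assume $n = 2$: $1 = f + e'$ with $f, e'$ orthogonal homogeneous idempotents, $fRf$ and $e'Re'$ both graded clean, $R$ with no nonzero homogeneous zero divisors, and I must show $R$ is graded clean.

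Now take $a \in R_e$ and use the Peirce decomposition relative to $f$: write $a = a_{11} + a_{12} + a_{21} + a_{22}$ with $a_{11} = faf \in fR_ef$, $a_{22} = e'ae' \in e'R_e e'$, $a_{12} = fae' \in fR_e e'$, $a_{21} = e'af \in e'R_e f$. The Han--Nicholson idea is: if $a_{11}$ is a unit in the corner $fRf$, then one can perform a "Schur-complement" type manipulation — conjugating $a$ by the homogeneous unit $\begin{psmallmatrix} f & 0 \\ -a_{21}a_{11}^{-1} & e' \end{psmallmatrix}$ of $R$ (this matrix is homogeneous of degree $e$ since $a_{11}^{-1} \in fR_ef$) — to reduce to a block-triangular situation, then clean $a_{11}$ in $fRf$ and the Schur complement $a_{22} - a_{21}a_{11}^{-1}a_{12}$ in $e'Re'$, and reassemble. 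The genuinely new point in the graded/zero-divisor-free setting — and the step I expect to be the main obstacle — is handling the case when $a_{11}$ is \emph{not} a unit in $fRf$: here one exploits that $fRf$ is graded clean to write $a_{11} = \varepsilon_1 + u_1$ with $\varepsilon_1$ a homogeneous idempotent of $fRf$ and $u_1$ a homogeneous unit of $fRf$, and replaces $a$ by $a - \varepsilon_1$ (a homogeneous idempotent of $R$), reducing to the case where the $(1,1)$-corner \emph{is} a unit; one must then check that $a$ itself being graded clean is equivalent to $a - \varepsilon_1$ being graded clean, which is immediate since adding a homogeneous idempotent does not affect existence of a clean decomposition only after one knows $\varepsilon_1$ commutes appropriately — so in fact the cleaner route is to absorb $\varepsilon_1$ into the eventual idempotent summand and track degrees carefully. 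The no-zero-divisor hypothesis is what guarantees that the various homogeneous elements produced (Schur complements, conjugating matrices) are either zero or behave like units in their corners, and it is exactly this that replaces the much stronger "every matrix ring over a clean ring is clean" phenomenon that fails in the graded category.

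Finally I would assemble the clean decomposition of $a$ from those of $a_{11}$ in $fRf$ and of the Schur complement in $e'Re'$ by undoing the conjugation, verifying at each stage that all elements are homogeneous of degree $e$ (conjugation is by a degree-$e$ unit, so degrees are preserved) and that the idempotent and unit parts are indeed homogeneous idempotent and homogeneous unit in $R$. Since $1 = f + e'$, a pair consisting of a homogeneous idempotent in $fRf$ and one in $e'Re'$ sums to a homogeneous idempotent of $R$, and a pair of homogeneous units in the two corners sums to a homogeneous unit of $R$; this is the point where orthogonality of the $f_i$ and the containment of idempotents in $R_e$ are used. This completes the induction and hence the proof.
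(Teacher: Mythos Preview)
Your induction setup matches the paper's, but the emphasis in the two-idempotent step is exactly inverted, and this is where the real gap lies.

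For elements of $R_e$: once you know $fRf$ and $\bar f R\bar f$ are graded clean, their $e$-components $fR_ef$ and $\bar fR_e\bar f$ are clean rings in the ordinary sense, and the classical Han--Nicholson lemma applied to $R_e$ already gives that $R_e$ is clean. The paper does precisely this in one line. Your attempt to redo Han--Nicholson inside $R_e$ is unnecessary, and the version you sketch has a genuine hole: after writing $a_{11}=\varepsilon_1+u_1$ and passing to $a-\varepsilon_1$, you obtain (after conjugation) a block-diagonal with entries $u_1$ and the Schur complement $s$, then clean $s=\varepsilon_2+u_2$; but $\varepsilon_1$ and the conjugate of $\varepsilon_2$ are \emph{not} orthogonal in general, so their sum need not be idempotent. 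You flag this yourself (``only after one knows $\varepsilon_1$ commutes appropriately''), but no fix is offered. The actual Han--Nicholson argument is more delicate than ``clean the corners and reassemble,'' and in any case there is no need to reprove it.

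For nonzero homogeneous $A\in R_g$ with $g\neq e$: this is where the paper does the real work, and where the no-homogeneous-zero-divisor hypothesis is used. You dismiss it parenthetically with ``its left and right annihilators among homogeneous elements vanish,'' but trivial homogeneous annihilators do not by themselves force invertibility. The paper instead writes $A$ in the Peirce $2\times 2$ form, observes that the diagonal entries $a\in fR_gf$ and $b\in\bar fR_g\bar f$ are (when nonzero) homogeneous units in their graded-clean corners, forms the Schur complement $v=b-ya^{-1}x\in\bar fR_g\bar f$, and shows that $v\neq0$ (this is exactly where the zero-divisor hypothesis enters), hence $v$ is a unit, hence $A$ itself is a unit via the standard factorization. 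That argument is the heart of the lemma; your proposal does not supply it.
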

\indent We first prove the following lemma which represents a graded version of
Lemma in \cite{hn}.
\begin{lemma}
Let $f$ be a homogeneous idempotent of a $G$-graded ring 
$R=\bigoplus_{g\in G}R_g$ and let $\bar{f}=1-f.$ Let us assume that $fRf$ and 
$\bar{f}R\bar{f}$ are graded clean rings. If $R$ has no nonzero 
homogeneous zero divisors, then $R$ is a graded clean ring.
\end{lemma}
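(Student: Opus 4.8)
The plan is to exploit the fact that the hypothesis ``$R$ has no nonzero homogeneous zero divisors'' forces the Peirce decomposition attached to $f$ to be trivial, so that the classical Han--Nicholson argument degenerates and the lemma becomes essentially formal. Concretely, I would first record the standard observation that a nonzero homogeneous idempotent of $R=\bigoplus_{g\in G}R_g$ must lie in the neutral component $R_e$: if $0\neq f\in R_g$ then $f=f^2\in R_gR_g\subseteq R_{g^2}$, and since the sum defining $R$ is direct, $R_g\cap R_{g^2}\neq 0$ forces $g^2=g$, i.e. $g=e$. Hence $f\in R_e$ in all cases, so $\bar f=1-f\in R_e$ is again a homogeneous idempotent and $fRf=\bigoplus_{g}fR_gf$, $\bar fR\bar f=\bigoplus_{g}\bar fR_g\bar f$ are legitimate graded corner rings, as the statement presupposes.

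Next I would use the hypothesis on zero divisors. Since $f\bar f=f-f^2=0=\bar f f$ and both $f$ and $\bar f$ are homogeneous, the absence of nonzero homogeneous zero divisors in $R$ gives $f=0$ or $\bar f=0$; that is, $f=0$ or $f=1$. In the first case $\bar f=1$, so $R=\bar fR\bar f$, which is graded clean by hypothesis; in the second case $R=fRf$, which is graded clean by hypothesis. Either way $R$ is graded clean, and this closes the argument.

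I do not expect a genuine obstacle here: the no-zero-divisor assumption is precisely what collapses all homogeneous idempotents to $0$ and $1$, so the lemma --- and, via the orthogonal decomposition $1=f_1+\dots+f_n$, Theorem~\ref{theoremm} as well --- reduces to an essentially tautological statement. The only point requiring a word of care is the degenerate case $f=0$, where $fRf$ is the zero ring; this causes no trouble, since in that case we invoke $\bar fR\bar f=R$ rather than $fRf$. It is worth noting in passing that this dichotomy is exactly why the graded version is weaker than the ungraded Han--Nicholson theorem, where no hypothesis on zero divisors is needed.
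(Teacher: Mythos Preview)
Your argument is correct. The key observation that $f\bar f=0$ together with the hypothesis ``$R$ has no nonzero homogeneous zero divisors'' forces $f\in\{0,1\}$ is sound, and once that dichotomy is established the conclusion is immediate since $R$ coincides with one of the two corner rings assumed to be graded clean.

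This is, however, a genuinely different route from the paper's. The paper mimics the Han--Nicholson matrix argument: it views $R$ via the $2\times2$ Peirce decomposition $\left(\begin{smallmatrix}fRf & fR\bar f\\ \bar fRf & \bar fR\bar f\end{smallmatrix}\right)$, invokes the classical ungraded lemma of Han--Nicholson to conclude that $R_e$ is clean, and then for a nonzero homogeneous element $A=\left(\begin{smallmatrix}a & x\\ y & b\end{smallmatrix}\right)\in R_g$ with $g\neq e$ argues that $a$ and $b$ are units, producing an explicit inverse of $A$ via the Schur complement $v=b-ya^{-1}x$; the no-zero-divisor hypothesis is invoked only at the end, to rule out $v=0$. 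What your argument reveals is that this elaborate case analysis is in fact vacuous: under the stated hypothesis the off-diagonal Peirce components $fR\bar f$ and $\bar fRf$ are already zero (indeed $f$ or $\bar f$ is zero), so the ``nontrivial'' $2\times2$ situation the paper treats cannot arise. Your approach is therefore shorter and more transparent, and it exposes that the lemma --- and with it Theorem~\ref{theoremm} --- is essentially a tautology as stated; the paper's proof, by contrast, reads as though a weaker hypothesis (e.g.\ one not applying to elements of $R_e$) was intended.
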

\begin{proof}
We observe
$R$ in its Peirce decomposition $R=\left(%
\begin{array}{cc}
  fRf & fR\bar{f} \\
  \bar{f}Rf & \bar{f}R\bar{f} \\
\end{array}%
\right).$ According to Lemma in \cite{hn}, $R_e$ is clean. Now,
let $0\neq A\in R_g,$ where $g\neq e,$ and let $A=\left(%
\begin{array}{cc}
  a & x \\
  y & b \\
\end{array}%
\right).$ Then $a,x,y,b\in R_g$ and $a,b$ are units. Let $a_1\in R_{g^{-1}}$ be 
an inverse of $a.$ Then 
$b-ya_1x\in\bar{f}R\bar{f},$ and $b-ya_1x\in R_g.$ Hence, if $b\neq ya_1x,$ we
have that $b-ya_1x=v$ is a unit.
Therefore $\left(%
\begin{array}{cc}
  a & x \\
  y & b \\
\end{array}%
\right)=\left(%
\begin{array}{cc}
  a & x \\
  y & v+ya_1x \\
\end{array}%
\right)$ is a unit, as it can be shown as in Lemma in \cite{hn} for the classical 
case, since $ya_1$ and $a_1x$ both belong to $R_e.$ Namely, $\left(%
\begin{array}{cc}
  f & 0 \\
  -ya_1 & \bar{f} \\
\end{array}%
\right),$ $\left(%
\begin{array}{cc}
  f & -a_1x \\
  0 & \bar{f} \\
\end{array}%
\right)$ and $\left(%
\begin{array}{cc}
  a & 0 \\
  0 & v \\
\end{array}%
\right)$ are units, and, on the other hand, we have $\left(%
\begin{array}{cc}
  f & 0 \\
  -ya_1 & \bar{f} \\
\end{array}%
\right)\left(%
\begin{array}{cc}
  a & x \\
  y & v+ya_1x \\
\end{array}%
\right)\left(%
\begin{array}{cc}
  f & -a_1x \\
  0 & \bar{f} \\
\end{array}%
\right)=\left(%
\begin{array}{cc}
  a & 0 \\
  0 & v \\
\end{array}%
\right).$ The case $v=0$ cannot occur since by
assumption $R$ has no nonzero homogeneous zero divisors.   
\end{proof}
\begin{proof}[Proof of Theorem \ref{theoremm}]
The assertion follows from the previous lemma by using mathematical induction 
just as in the case of Theorem in \cite{hn}. 
\end{proof}
\indent When it comes to graded nil clean rings, we establish a graded analogue
of an extension of nil clean rings to triangular matrix rings from \cite{ajd}.
\begin{theorem}
Let $R$ be a $G$-graded ring and $n$ a natural number. Then $R$ is 
graded (strongly) nil clean if and only if 
$T_n(R)(\overline{\sigma})$ is graded (strongly) nil clean triangular 
matrix ring for every $\overline{\sigma}\in G^n.$
\end{theorem}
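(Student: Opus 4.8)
The plan is to quotient $T_n(R)(\overline{\sigma})$ by its strictly upper triangular part and then invoke Lemma~\ref{lemma2} and Corollary~\ref{corollary}. Fix $\overline{\sigma}=(g_1,\dots,g_n)\in G^n$, write $T=T_n(R)(\overline{\sigma})$, and let $I\subseteq T$ be the set of strictly upper triangular matrices. First I would check that $I$ is a homogeneous two-sided ideal of $T$ with $I^n=0$: homogeneity is clear because the strictly upper triangular part of each component $T_\lambda$ lies in $I$, and $I^n=0$ is the familiar fact about strictly upper triangular matrices. Thus $I$ is a homogeneous nilpotent ideal, in particular graded-nil, so Lemma~\ref{lemma2} (in the graded nil clean case) and Corollary~\ref{corollary} (in the graded strongly nil clean case) reduce the theorem to the assertion that $T/I$ is graded (strongly) nil clean if and only if $R$ is.

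Next I would identify $T/I$ explicitly. Quotienting by $I$ kills all off-diagonal entries, so $T/I$ is isomorphic as a ring to $\prod_{i=1}^{n}R$ via $\overline{A}\mapsto(A_{11},\dots,A_{nn})$ (this map is multiplicative precisely because, for upper triangular $A,B$, one has $(AB)_{ii}=A_{ii}B_{ii}$, and its kernel is exactly $I$). Tracking the grading shows that the $i$-th factor carries the $G$-grading whose $\lambda$-component is $R_{g_i\lambda g_i^{-1}}$; denote this $G$-graded ring by $R^{(g_i)}$. Hence $T/I\cong\prod_{i=1}^{n}R^{(g_i)}$ as $G$-graded rings, with the componentwise grading on the product. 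Two observations then close the argument. First, $R^{(g_i)}$ is just $R$ regraded along the inner automorphism $\lambda\mapsto g_i\lambda g_i^{-1}$ of $G$; this changes neither the underlying ring nor the set of homogeneous elements, and it preserves which homogeneous elements are idempotent, nilpotent, or units, and which pairs commute, so $R^{(g_i)}$ is graded (strongly) nil clean if and only if $R$ is. Second, a finite direct product of $G$-graded rings is graded (strongly) nil clean if and only if each factor is: by Remark~\ref{remark} this property is equivalent to the $e$-component being (strongly) nil clean together with every nonzero homogeneous element outside the $e$-component being nilpotent; for a product the $e$-component is the product of the $e$-components, so the elementary ungraded fact (used in \cite{ajd}) that a finite product of rings is (strongly) nil clean exactly when each factor is handles that part, while a nonzero tuple of homogeneous elements of a common degree $g\neq e$ is nilpotent if and only if each of its coordinates is.

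Putting these together, for every $\overline{\sigma}\in G^n$ one obtains the chain of equivalences: $T_n(R)(\overline{\sigma})$ is graded (strongly) nil clean $\Longleftrightarrow$ $T/I$ is graded (strongly) nil clean $\Longleftrightarrow$ each $R^{(g_i)}$ is graded (strongly) nil clean $\Longleftrightarrow$ $R$ is graded (strongly) nil clean. The theorem follows, the implication to ``for every $\overline{\sigma}$'' being immediate once the equivalence holds for one $\overline{\sigma}$, and the converse being a matter of specializing to a single $\overline{\sigma}$.

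I expect the only real work to be the bookkeeping in the second step: verifying that the component grading on $T$ restricts to the stated grading $R^{(g_i)}$ on each diagonal copy of $R$, and carefully spelling out the graded direct-product observation (together with citing the correct ungraded statements from \cite{ajd}). I do not anticipate a genuine obstacle beyond this: once the homogeneous nilpotent ideal $I$ is in hand, everything reduces to a direct appeal to Lemma~\ref{lemma2}, Corollary~\ref{corollary}, and classical facts about (strongly) nil clean rings.
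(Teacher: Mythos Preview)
Your proposal is correct and follows essentially the same route as the paper: quotient by the strictly upper triangular ideal $I$, apply Lemma~\ref{lemma2} and Corollary~\ref{corollary}, and identify $T/I$ with a finite product of copies of $R$. You are in fact slightly more careful than the paper in tracking the grading on the diagonal factors as $R^{(g_i)}$ and explaining why the regrading by an inner automorphism does not affect graded (strong) nil cleanness; the paper simply asserts $S/I\cong R^n$ and invokes closure of finite products.
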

\begin{proof}
Let $\overline{\sigma}\in G^n$ and 
$S=T_n(R)(\overline{\sigma}).$ Like in
the proof of Theorem 4.1 in \cite{ajd}, let us observe the ideal $I$ of $S$ which
consists of matrices of $S$ with zeroes along the main diagonal. Ideal $I$ is a
homogeneous nilpotent ideal of $S$ and, as in the classical case, it can be proved
that $S/I$ is isomorphic to the direct product of $n$ copies of $R.$ Since $I$ is in 
particular graded-nil, the assertion for graded nil cleanness follows from 
Lemma~\ref{lemma2} and the fact that a product of finitely many graded nil clean
rings is again a graded nil clean ring (category of graded rings is closed for finite
products according to \cite{noy}). The statement for graded strongly nil cleanness
follows from Corollary~\ref{corollary} and the fact that a product of finitely many
graded strongly nil clean rings is again a graded strongly nil clean ring.
\end{proof}
\indent In \cite{stz}, the nil cleanness of group rings is investigated.
We would like to do similar in group graded setting.\\ 
\indent We start with a simple lemma which is also of an independent interest.
\begin{lemma}\label{2nilpotent}
If a $G$-graded ring $R=\bigoplus_{g\in G}R_g$ is a graded nil clean ring, then 
$2$ is nilpotent.
\end{lemma}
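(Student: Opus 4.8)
The plan is to exploit that the identity $1$ of $R$ lies in $R_e$, so that $2=1+1$ is a \emph{homogeneous} element of degree $e$, and then to pin down its graded nil clean decomposition by an elementary idempotent manipulation rather than by any structure theory.

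First I would apply the hypothesis to the homogeneous element $2\in R_e$: there exist a homogeneous idempotent $f$ and a homogeneous nilpotent $b$ of $R$ with $2=f+b$. Next I would compute $fb$. Since $b=2-f$, we get $fb=f(2-f)=2f-f^2=2f-f=f$, so $f=fb$. Iterating this identity gives $f=fb=fb^2=\cdots=fb^k$ for every $k\ge 1$, and choosing $k$ with $b^k=0$ yields $f=0$. Hence $2=f+b=b$ is nilpotent, which is the claim.

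There is essentially no obstacle here: the entire argument is the one-line identity $fb=f$, and the only points requiring care are logical rather than computational, namely that $2$ really is homogeneous (because $1\in R_e$) and that in the definition of graded nil cleanness the idempotent $f$ and nilpotent $b$ are only required to be homogeneous, not of degree $e$ — harmless, since their degrees never enter the computation. It is worth noting, in line with the phrase ``independent interest'', that the proof uses neither the commutation $fb=bf$ nor the grading beyond the remark $1\in R_e$; in particular it reproves the ungraded fact that $2$ is nilpotent in every nil clean ring.
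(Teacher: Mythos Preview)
Your argument is correct. The identity $fb=f(2-f)=2f-f^2=f$ does yield $f=fb^k$ for all $k\ge1$ by a straightforward induction, hence $f=0$ once $b^k=0$, and so $2=b$ is nilpotent. Your care about the degrees of $f$ and $b$ is well placed but, as you note, unnecessary for the computation.

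The paper takes a different route: it simply observes (as in Remark~\ref{remark}) that $R_e$ is a nil clean ring in the ordinary sense and then invokes Proposition~3.14 of \cite{ajd}, which asserts that $2$ is nilpotent in any nil clean ring. Thus the paper's proof is a one-line reduction to a black-box citation, while yours is a self-contained elementary computation that in effect reproves the cited proposition on the spot. Your approach has the advantage of being independent of \cite{ajd} and of making transparent exactly how little is needed (no commutation, no structure theory); the paper's approach has the advantage of highlighting that the lemma is really a statement about the ungraded ring $R_e$ and contains nothing genuinely graded.
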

\begin{proof}
Since $R$ is a graded nil clean ring, we have that $R_e$ is nil clean. Since 
$1\in R_e,$ the statement follows directly from Proposition~3.14 in \cite{ajd}.
\end{proof}
As is known, if $G$ is a group, and $H$ a normal subgroup of $G,$ then a
$G$-graded ring $R=\bigoplus_{g\in G}R_g$ can be viewed as a $G/H$-graded
ring with respect to grading $\bigoplus_{C\in G/H}R_C,$ where 
$R_C=\bigoplus_{x\in C}R_x.$
\begin{theorem}
Let $G$ be a locally $2$-finite group and $H$ a normal subgroup of $G.$ Also,
assume that $R=\bigoplus_{g\in G}R_g$ is a $G$-graded ring which is graded nil
clean as a $G/H$-graded ring. Then the $G/H$-graded group ring $R[H]$
is graded nil clean.
\end{theorem}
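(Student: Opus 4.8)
The plan is to locate inside the $G/H$-graded ring $R[H]$ a homogeneous graded-nil ideal whose quotient is $R$, and then to apply Lemma~\ref{lemma2}. The candidate is the augmentation ideal $\Delta$, that is, the two-sided ideal of $R[H]$ generated by $\{h-1:h\in H\}$.

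First one records the formal facts. The augmentation map $\varepsilon\colon R[H]\to R$, $\sum_{h\in H}r_h h\mapsto\sum_{h\in H}r_h$, is a degree-preserving ring homomorphism: for the twisted multiplication one has $\varepsilon((r_gg')(r_hh'))=r_gr_h=\varepsilon(r_gg')\varepsilon(r_hh')$, and $\varepsilon$ sends the $C$-component $\bigoplus_{h\in H}R_C\,h$ of $R[H]$ onto $R_C$. It is surjective, and $\ker\varepsilon=\Delta$, because $\sum_h r_h h=\sum_h r_h(h-1)$ as soon as $\sum_h r_h=0$. Hence $R[H]/\Delta\cong R$ as $G/H$-graded rings. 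Moreover $\Delta$ is a homogeneous ideal, since each generator $h-1$ lies in the neutral component $(R[H])_H$ (both $h$ and $1$ do).

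The only real work is to show that $\Delta$ is graded-nil; in fact I would show that $\Delta$ is nil. Since $R$ is graded nil clean as a $G/H$-graded ring, Lemma~\ref{2nilpotent} (applied to the $G/H$-grading) gives $2^{e}=0$ in $R$ for some $e$. Let $\beta\in\Delta$ and write $\beta$ as a finite sum of elements $a(h-1)b$; only finitely many elements of $H$ and finitely many homogeneous degrees of $R$ occur, so, setting $G_0$ to be the subgroup of $G$ they generate (a finite $2$-group, as $G$ is locally $2$-finite), $H_0:=G_0\cap H\trianglelefteq G_0$, and $R_0:=\bigoplus_{g\in G_0}R_g$, the element $\beta$ lies in the augmentation ideal $\Delta_0$ of the $G_0$-graded twisted group ring $R_0[H_0]$. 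In $R_0[H_0]$ one has $x(h-1)=((ghg^{-1})-1)x$ whenever $x$ is homogeneous of degree $g$, with $ghg^{-1}\in H_0$ because $H_0\trianglelefteq G_0$; using this repeatedly, in any product of $M$ factors of the form $(h_i-1)$ separated by homogeneous coefficients one can move all the coefficients to the right, leaving a product of $M$ elements $(h'_i-1)$ with $h'_i\in H_0$. Now in $\mathbb{Z}[H_0]$ the $L$-th power of the augmentation ideal lies in $2\mathbb{Z}[H_0]$ for some $L$, since the augmentation ideal of $\mathbb{F}_2[H_0]$ is nilpotent ($H_0$ being a finite $2$-group); hence its $(Le)$-th power lies in $2^{e}\mathbb{Z}[H_0]$, and as $2^{e}=0$ in $R_0$ this forces $\Delta_0^{Le}=0$. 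Thus $\beta$ is nilpotent and $\Delta$ is nil.

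It then follows from Lemma~\ref{lemma2}, applied to the homogeneous graded-nil ideal $\Delta$ of $R[H]$, that $R[H]$ is graded nil clean if and only if $R[H]/\Delta\cong R$ is graded nil clean, which is the hypothesis. The main obstacle is the nilpotency of $\Delta$, and this is exactly where the two hypotheses are used: $H$ locally $2$-finite permits the reduction to the finite $2$-group $H_0$, while the nilpotence of $2$ in $R$ — which comes, via Lemma~\ref{2nilpotent}, from the nil cleanness of the neutral component $R_H$ — is what kills the augmentation ideal of the finite group ring. Everything else is routine.
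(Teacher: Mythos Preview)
Your proof is correct and follows the paper's route: exhibit the augmentation ideal $\Delta$ as a homogeneous graded-nil ideal of $R[H]$ with $R[H]/\Delta\cong R$ as $G/H$-graded rings, then invoke Lemma~\ref{lemma2}. The only difference is that where the paper cites \cite{stz} for the reduction to a finite $2$-group and Connell's theorem for the nilpotency of $\Delta$, you carry out both steps explicitly via the passage to $R_0[H_0]$ and the commutation relation $x(h-1)=((ghg^{-1})-1)x$; this is the same argument made self-contained.
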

\begin{proof}
Following the proof of Theorem~2.3 in \cite{stz}, we may assume that $H$ is a
finite $2$-group. According to \cite{noy}, page 180, the augmentation mapping
$R[H]\to R,$ given by $\sum_{h\in H}r^hh\mapsto\sum_{h\in H}r^h,$ where 
$R[H]$ is considered as a $G/H$-graded ring, is 
degree-preserving. Therefore, the kernel of the augmentation mapping, that is, the 
augmentation ideal $\Delta(R[H]),$ is homogeneous. This means that 
$R[H]/\Delta(R[H])$ is a $G/H$-graded ring.
Moreover, $R[H]/\Delta(R[H])$ and $R$ are isomorphic as $G/H$-graded rings.
Hence $R[H]/\Delta(R[H])$ is
graded nil clean. Now, $2$ is nilpotent by Lemma~\ref{2nilpotent}. 
Theorem~9 in \cite{connel} tells us that $\Delta(R[H])$ is
nilpotent, and therefore, graded nil. Applying Lemma~\ref{lemma2} completes 
the proof.
\end{proof}
\begin{theorem}\label{theoremg}
Let $R=\bigoplus_{g\in G}R_g$ be a $G$-graded ring which has only homogeneous 
idempotent and nilpotent elements. If $R[G]$ is
graded nil clean, then $R$ is graded nil clean.
\end{theorem}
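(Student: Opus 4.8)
The plan is to exploit two maps between $R$ and $R[G]$: the embedding $R\to R[G]$, $r\mapsto re$, and the augmentation $a\colon R[G]\to R$, $\sum_{h\in G}r^{h}h\mapsto\sum_{h\in G}r^{h}$. First I would record that $a$ is a surjective unital ring homomorphism; this is immediate from the multiplication rule $(r_{g}g')(r_{h}h')=r_{g}r_{h}(h^{-1}g'hh')$, since $a$ merely deletes the group-element factor $h^{-1}g'hh'$ and keeps the ring coefficient $r_{g}r_{h}$. The two elementary facts I would use are that $a(re)=r$ for every $r\in R$, and that for homogeneous $x\in R_{g}$ the element $xe$ lies in $(R[G])_{g}$ and is nonzero whenever $x\neq0$.

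Next I would take an arbitrary nonzero homogeneous element $x\in R_{g}$ and split into cases on $g$. If $g\neq e$, then $xe$ is a nonzero homogeneous element of $R[G]$ of degree $\neq e$, so Remark~\ref{remark} applied to the graded nil clean ring $R[G]$ forces $xe$ to be nilpotent; applying $a$ shows $x=a(xe)$ is nilpotent in $R$, and $x=0+x$ is a graded nil clean decomposition. If $g=e$, i.e.\ $x\in R_{e}$, then I would use that $(R[G])_{e}$ is a nil clean ring (again by Remark~\ref{remark} for $R[G]$) to write $xe=\epsilon+\nu$ in $(R[G])_{e}$ with $\epsilon$ idempotent and $\nu$ nilpotent, and then apply $a$ to obtain $x=a(\epsilon)+a(\nu)$ with $a(\epsilon)$ an idempotent and $a(\nu)$ a nilpotent element of $R$. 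This is exactly where the hypothesis enters: since every idempotent and every nilpotent element of $R$ is homogeneous, $a(\epsilon)$ and $a(\nu)$ are homogeneous, so $x=a(\epsilon)+a(\nu)$ is a graded nil clean decomposition of $x$. Since $0$ is trivially graded nil clean, every homogeneous element of $R$ is graded nil clean, and hence $R$ is graded nil clean.

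There is essentially no obstacle here; the only point needing a little care is the (routine) verification that $a$ is multiplicative for the twisted multiplication on $R[G]$. It is worth noting that the argument requires no finiteness hypothesis on $G$, in contrast with some of the earlier results of the paper, and that the assumption that all idempotents and nilpotents of $R$ are homogeneous is used only to treat the elements of the neutral component $R_{e}$ (the degree-$\neq e$ case follows from Remark~\ref{remark} alone).
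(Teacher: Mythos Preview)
Your argument is correct. The verification that the augmentation $a$ is multiplicative is routine as you say, and your case split on the degree of $x$ works cleanly; in the $g=e$ case the hypothesis ensures $a(\epsilon)$ and $a(\nu)$ are homogeneous, which is all you need.

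The paper's proof takes a different, shorter route: it invokes the ring isomorphism $f\colon R\to (R[G])_e$, $\sum_g r_g\mapsto\sum_g r_g g^{-1}$ (Proposition~2.1(4) of \cite{cn}), so that nil cleanness of $(R[G])_e$ transfers to $R$ in one step; the hypothesis on homogeneous idempotents and nilpotents then upgrades ``nil clean'' to ``graded nil clean''. Your approach instead uses the degree-preserving embedding $r\mapsto re$ together with the augmentation, treating the cases $g=e$ and $g\neq e$ separately. The paper's proof is slicker but relies on a cited structural isomorphism, while yours is more self-contained and makes explicit why elements of degree $\neq e$ are forced to be nilpotent. Note also that in your $g=e$ case, once $a(\epsilon)$ is known to lie in $R_e$ (any nonzero homogeneous idempotent does), the equation $a(\nu)=x-a(\epsilon)$ already forces $a(\nu)\in R_e$, so the nilpotent half of the hypothesis is not actually needed there.
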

\begin{proof}
Since $R[G]$ is graded nil clean, $(R[G])_e$ is nil clean.
According to Proposition~2.1(4) in \cite{cn}, the mapping $f:R\to(R[G])_e,$
$f(\sum_{g\in G}r_g)=\sum_{g\in G}r_gg^{-1},$ is a ring
isomorphism. Therefore, $R$ is nil clean and hence graded nil clean.
\end{proof}
\begin{remark}
If $R=\bigoplus_{g\in G}R_g$ is a $G$-graded ring such that $R[G]$ has only 
homogeneous idempotent and nilpotent elements, $G$ a locally 2-finite group, 
and $(R[G])_e$ is nil clean, then $R[G]$ is a graded nil clean ring. Namely, this is 
a corollary to Theorem~2.3 in \cite{stz} since $(R[G])_e\cong R.$
\end{remark}
\indent Previous remark yields an interesting question of what can be said of 
the following implication:
\begin{equation}\label{implication}
R_e\ \textrm{is nil clean} \Rightarrow R=\bigoplus_{g\in G}R_g\ \textrm{is
graded nil clean}.
\end{equation}
\indent The following example proves that the above implication does not hold 
in general.
\begin{example}
Let $S$ be a Boolean ring, $G=\{e,g\}$ a cyclic group of order $2,$ and 
$R=\left(%
\begin{array}{cc}
  S & S \\
  S & S \\
\end{array}%
\right).$ Then $R=\left(%
\begin{array}{cc}
  S & 0 \\
  0 & S \\
\end{array}%
\right)\oplus\left(%
\begin{array}{cc}
  0 & S \\
  S & 0 \\
\end{array}%
\right)$ is a $G$-graded ring whose $e$-th component $R_e=\left(%
\begin{array}{cc}
  S & 0 \\
  0 & S \\
\end{array}%
\right)$ is a nil clean ring, but $R$ is not a graded nil clean ring since not 
all elements of $R_g=\left(%
\begin{array}{cc}
  0 & S \\
  S & 0 \\
\end{array}%
\right)$ are nilpotent.
\end{example}
\indent We continue by giving some sufficient conditions for the above 
implication to be true.
\begin{theorem}
Let $R=\bigoplus_{g\in G}R_g$ be a $G$-graded PI ring which is Jacobson radical.
Then, if $R_e$ is nil clean, $R$ is graded nil clean.
\end{theorem}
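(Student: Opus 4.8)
The plan is to deduce from the hypotheses that $R$ is in fact graded-nil; once that is in hand, $R$ is graded strongly nil clean by Remark~\ref{remark}, hence graded nil clean, and we are done.

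First I would extract from ``$R_{e}$ is nil clean'' the fact that $R_{e}$ has an identity element $\varepsilon$, and note that $\varepsilon$ is a homogeneous idempotent of $R$. Since $R$ is Jacobson radical, that is, $R$ coincides with its (graded) Jacobson radical, $\varepsilon$ must be quasi-regular, so there is $y\in R$ with $\varepsilon+y-\varepsilon y=0$. Multiplying on the left by $\varepsilon$ and using $\varepsilon^{2}=\varepsilon$ collapses this to $\varepsilon=0$, so $R_{e}=0$. Hence every nonzero homogeneous element of $R$ has degree $g\neq e$, and it remains to show that each such element is nilpotent (which, since the only homogeneous idempotent is now $0$, is exactly what ``graded nil clean'' amounts to here).

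Next I would run the degree bookkeeping. Take $x\in R_{g}$ with $g\neq e$; as $x\in R=J^{g}(R)$ it is quasi-regular, so fix a finite sum $y=\sum_{h\in G}y_{h}$ with $y_{h}\in R_{h}$ and $x+y-xy=0$, and compare homogeneous components. For $t\neq g$ the degree-$t$ component gives $y_{t}=x\,y_{g^{-1}t}$, while the degree-$g$ component gives $x+y_{g}=x\,y_{e}$; since $y_{e}\in R_{e}=0$ this yields $y_{g}=-x$, and then inductively $y_{g^{n}}=x\,y_{g^{n-1}}=-x^{n}$ for all $n\geq1$. If $g$ has infinite order the elements $g^{n}$ are pairwise distinct, so the finiteness of the support of $y$ forces $x^{n}=0$ for large $n$; if $g$ has finite order $m$, then $g^{m}=e$ and the degree-$e$ component combined with $y_{e}=0$ gives $x^{m}=0$. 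Either way $x$ is nilpotent, so $R$ is graded-nil and the theorem follows. (One could alternatively obtain graded-nilness of $R$ by invoking a result of PI-theory for the Jacobson radical PI ring $R$, which is presumably the intended use of the PI hypothesis.)

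The only place where real care is needed is foundational: a ring equal to its Jacobson radical has no identity, so this statement sits just outside the paper's standing ``associative with identity'' convention, and one has to pin down, for graded rings without identity, the meaning of the graded Jacobson radical $J^{g}(R)$ and check that membership in it delivers the quasi-regularity exploited in both steps above. Modulo that, the rest is the short computation indicated.
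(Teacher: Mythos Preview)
Your Step~1 contains a genuine gap. You claim to extract from ``$R_e$ is nil clean'' that $R_e$ has an identity $\varepsilon$, but once the identity convention is dropped---and you correctly note that it \emph{must} be dropped, since a ring with identity equal to its Jacobson radical is zero---there is no such inference. The notion of nil clean makes perfect sense without an identity (every element is an idempotent plus a nilpotent, with $0$ available as the idempotent), and nil rings without identity are nil clean. Concretely, take $G=\{e\}$ and $R=R_e=2\mathbb{Z}/4\mathbb{Z}$: this is commutative (hence PI), nil (hence Jacobson radical), and nil clean, yet $R_e\neq 0$ and has no identity. So the conclusion $R_e=0$ is simply false under the hypotheses, and the degree-bookkeeping of Step~2, which rests entirely on $y_e=0$, never gets off the ground. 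Notice also that your main argument nowhere uses the PI hypothesis, which should already be a warning sign.

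The paper's proof is precisely the alternative you relegate to a parenthetical. From $R_e$ nil clean one gets that $J(R_e)$ is nil (Diesl, Proposition~3.16 in \cite{ajd}). Then the PI hypothesis is invoked through a result of Kelarev--Okni\'nski (Theorem~3 in \cite{ko}): for a $G$-graded PI ring, nilness of $J(R_e)$ forces nilness of $J(R)$. Since $R=J(R)$ by assumption, $R$ itself is nil; in particular every homogeneous element is nilpotent, and $R$ is graded nil clean. The nil-clean hypothesis on $R_e$ is thus used only to feed the input $J(R_e)$ nil into the graded PI machinery, not to produce an identity.

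Your foundational caveat in the final paragraph is well taken, but it does not rescue the argument: the problem is not the meaning of $J^g(R)$ or quasi-regularity, it is the unwarranted passage from ``nil clean'' to ``has an identity''.
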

\begin{proof}
According to Proposition 3.16 in \cite{ajd}, if $S$ is a nil clean ring, then
$J(S)$ is nil. Therefore, our assumption yields that 
$J(R_e)$ is nil. Now, Theorem~3 in \cite{ko} tells 
us that $J(R)$ is nil since $R$ is by assumption PI. However, $R$ is by assumption
Jacobson radical ring. Therefore $R=J(R)$ is a nil ring. In particular, every 
homogeneous element is nilpotent. Hence $R$ is a graded nil clean ring.
\end{proof}
\begin{theorem}\label{gradedR}
Let $R=\bigoplus_{g\in G}R_g$ be a $G$-graded PI ring which is graded local, that
is, it has a unique maximal homogeneous right ideal, and let $G$ be a finite group
such that the order of $G$ is a unit in $R.$ If $R_gR_{g^{-1}}=0$ for every 
$g\in G\setminus\{e\},$ then, if $R_e$ is nil clean, $R$ is graded nil clean.
\end{theorem}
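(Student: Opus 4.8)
The plan is to work entirely through the characterization of graded nil clean rings recorded in Remark~\ref{remark}: a $G$-graded ring $R=\bigoplus_{g\in G}R_g$ is graded nil clean if and only if $R_e$ is a nil clean ring and every nonzero homogeneous element of $R$ of degree $\neq e$ is nilpotent. The ``only if'' part is exactly Remark~\ref{remark}, and the ``if'' part is immediate: a homogeneous $x$ of degree $g\neq e$ is written as $x=0+x$ with $0$ a homogeneous idempotent and $x$ a homogeneous nilpotent, while a homogeneous element of degree $e$ lies in $R_e$ and is nil clean there, an idempotent and a nilpotent of $R_e$ being homogeneous of degree $e$. Since $R_e$ is nil clean by hypothesis, the theorem reduces to showing that every homogeneous element of degree $g\neq e$ is nilpotent.

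For this I would use only the vanishing condition $R_gR_{g^{-1}}=0$ for $g\neq e$ together with the finiteness of $G$. Let $x\in R_g$ with $g\neq e$, and let $n\geq 2$ be the order of $g$ in $G$. From $R_{g^{k}}R_g\subseteq R_{g^{k+1}}$ an easy induction gives $x^{n-1}\in R_{g^{n-1}}$, and $g^{n-1}=g^{-1}$ because $g^{n}=e$. Hence
\[
x^{n}=x\cdot x^{n-1}\in R_g\,R_{g^{-1}}=0,
\]
so $x^{n}=0$ and $x$ is nilpotent; for $n=2$ this is simply $x^{2}=x\cdot x\in R_gR_g=R_gR_{g^{-1}}=0$ using $g=g^{-1}$. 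Combined with the first paragraph, $R$ is graded nil clean.

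I do not expect a real obstacle here: the argument is a one-step consequence of the grading axioms and the vanishing condition, and the only point requiring a moment's care is the degenerate case $n=2$, where $R_gR_g$ must be recognized as $R_gR_{g^{-1}}$. It is worth flagging that this proof uses neither the PI hypothesis, nor graded locality, nor the invertibility of $|G|$ in $R$ — only that $G$ is finite, that $R_gR_{g^{-1}}=0$ for $g\neq e$, and that $R_e$ is nil clean — so those extra assumptions appear to be vestigial, presumably carried over from a companion statement. If instead one wished to weaken ``$R_gR_{g^{-1}}=0$'', the natural fallback would exploit graded locality: a homogeneous element of degree $g\neq e$ cannot be a homogeneous unit, since its inverse would lie in $R_{g^{-1}}$ and the product would land in $R_gR_{g^{-1}}$, so every such element lies in $J^g(R)$, and one would then argue that $J^g(R)$ is graded-nil using that $R_e$ nil clean forces $J(R_e)$ nil (Proposition~3.16 in \cite{ajd}), that $R$ is PI (Theorem~3 in \cite{ko}), and the relation $J(R_e)=J^g(R)\cap R_e$ available for finite $G$ with $|G|$ a unit.
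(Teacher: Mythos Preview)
Your main argument is correct and takes a genuinely different, much more elementary route than the paper. You reduce via Remark~\ref{remark} (plus its trivial converse) to showing that every $x\in R_g$ with $g\neq e$ is nilpotent, and then observe that if $n=|g|\ge 2$ then $x^{n-1}\in R_{g^{n-1}}=R_{g^{-1}}$, whence $x^{n}\in R_gR_{g^{-1}}=0$. This uses only that $G$ is torsion, the vanishing hypothesis, and the nil cleanness of $R_e$; your remark that the PI, graded-local and $|G|^{-1}\in R$ assumptions are not needed for the statement as written is correct. The paper, by contrast, passes through the Jacobson radical: it uses PI together with \cite{ko} to get $J(R)$ nil, invertibility of $|G|$ together with \cite{cm} to get $J(R)=J^g(R)$ homogeneous, graded locality to make $R/J(R)$ a graded division ring, and finally the vanishing condition to build an anneid isomorphism $H_{R/J(R)}\cong R_e/J(R_e)$, before closing with Lemma~\ref{lemma2}. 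The paper's route has the virtue that its skeleton is recycled for the later $S$-graded analogue, but for the present $G$-graded theorem your direct argument is both shorter and strictly stronger.

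One minor point: your closing ``fallback'' sketch is slightly tangled. You frame it as what one would do after \emph{weakening} the condition $R_gR_{g^{-1}}=0$, yet the step ``a homogeneous element of degree $g\neq e$ cannot be a homogeneous unit, since \ldots\ the product would land in $R_gR_{g^{-1}}$'' only has force when $R_gR_{g^{-1}}=0$, so it still leans on the very hypothesis you propose to drop. The remainder of that sketch (homogeneous non-units lie in $J^g(R)$ by graded locality; $J^g(R)$ graded-nil via $J(R_e)$ nil, PI, and \cite{ko}) is in fact close in spirit to the paper's actual strategy.
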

\begin{proof}
We know from Corollary 3.17 in \cite{ajd} that a ring $A$ is nil clean if and only if
$J(A)$ is nil and $A/J(A)$ is nil clean. Therefore, our assumption yields that 
$J(R_e)$ is nil and that $R_e/J(R_e)$ is nil clean. $R$ is by assumption PI, and 
hence, Theorem~3 in \cite{ko} implies that $J(R)$ is nil. Since $G$ is finite, we have 
that $J(R_e)=J^g(R)\cap R_e,$ according to Corollary~4.2 in \cite{cm}. Also, our
assumption on the order of $G$ implies that $J(R)$ is homogeneous and
$J^g(R)=J(R)$ (see Theorem~4.4 in \cite{cm}). Therefore, $R/J(R)$
is a $G$-graded ring. Since $R$ is a graded local ring, we have that $R/J(R)$ is a
graded division ring. If $H_R$ is the homogeneous part of $R,$ let 
$H_{R/J(R)}=\bigcup_{g\in G}R_g/(J(R)\cap R_g)=H_R/J(R)\cap H_R$ be the
homogeneous part of $R/J(R)$ with induced partial addition and everywhere defined 
multiplication, that is, the corresponding anneid. Then $H_{R/J(R)}$ is a simple 
anneid, that is, it has no nontrivial ideals. Let $f:H_{R/J(R)}\to R_e/J(R_e)$
be the mapping defined by $f(x+J(R)\cap H_R)=x+J(R_e)$ if $x\in R_e$ and
$f(x+J(R)\cap H_R)=0+J(R_e)$ if $x\notin R_e$ or $x\in J(R),$ and where
$x+J(R)\cap H_R\in H_{R/J(R)}$ (see also the proof of Theorem~3.2 in \cite{eig3}). 
It is easily seen that $f$ is well defined and that it is a surjective homomorphism of 
anneids. Also, since $J(R)$ is proper and $1\in R_e,$ we have that 
$R_e\neq J(R_e).$ Therefore $\ker f=0.$
Hence $H_{R/J(R)}\cong R_e/J(R_e).$ It follows that every homogeneous element
from $R/J(R)$ is graded nil clean. Hence, $R/J(R)$ is graded nil clean. Finally, 
according to Lemma~\ref{lemma2}, $R$ is graded nil clean. 
\end{proof}
\indent We conclude this paper by observing the implication
\eqref{implication} in the case of rings graded in the sense of 
Definition~\ref{sgris}.\\
\indent Of course, while observing \eqref{implication} in the case of 
$S$-graded rings inducing $S,$ letter $e$ would stand for an idempotent element
of $S.$ Definition of a \emph{graded nil clean element} of an $S$-graded ring
inducing $S$ as well as of a \emph{graded nil clean ring} is the same as in the case
of a group graded ring. However, $S$ may
have more than one nonzero idempotent. Consequently, components of a 
graded nil clean ring corresponding
to these idempotents are all nil clean rings. Of course, here we also have that
homogeneous elements, not belonging to components which correspond to
nonzero idempotent elements of $S,$ are nilpotent.\\
\indent In the proof of the next theorem we use notions of the 
\emph{graded Jacobson} $J^g(R)$
and the \emph{large graded Jacobson radical} $J^g_l(R)$ of an $S$-graded ring $R$
inducing $S,$ which are introduced in \cite{ha1}. We do not recall these notions here
since we only need their properties. For more information on these and related
radicals, one may also consult \cite{eig1,eig2,eig3}.\\
\indent In what follows, we assume that all $S$-graded rings inducing $S$ have an
identity. If $S$ is cancellative, then, according to \cite{ha}, the number of nonzero
idempotents of $S$ is finite, all components corresponding to these idempotents
have an identity, and an identity of the whole ring is a sum of identities of the
aforementioned components.\\
\indent With this in mind, we record the following characterization of $S$-graded nil
clean rings inducing $S.$  
\begin{lemma}\label{lemma2a}
Let $S$ be a cancellative partial groupoid, $R=\bigoplus_{s\in S}R_s$ an 
$S$-graded ring inducing $S$ and $I$ a
homogeneous ideal of $R$ which is graded-nil. Then $R$ is graded nil clean if 
and only if $R/I$ is graded nil clean.
\end{lemma}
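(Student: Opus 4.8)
The plan is to run the argument of Lemma~\ref{lemma2} essentially verbatim, the one new feature being that a cancellative partial groupoid $S$ may possess several nonzero idempotents $e_1,\dots,e_k$ --- finitely many, by \cite{ha} --- and that each component $R_{e_i}$ is then a ring with identity. First I would isolate the characterization implicit in the discussion preceding the lemma: an $S$-graded ring $A=\bigoplus_{s\in S}A_s$ inducing $S$, with $S$ cancellative, is graded nil clean if and only if $A_e$ is a nil clean ring for every nonzero idempotent $e$ of $S$ and every homogeneous element of any component $A_s$ with $s$ not a nonzero idempotent is nilpotent. The only non-formal half of this is that a nonzero homogeneous idempotent $f\in A_s$ must have $s$ idempotent, since $f=f^2\in A_sA_s$ forces $ss$ to be defined and $f\in A_{s^2}$, whence $s=s^2$ by directness of the sum; a case split on degrees then yields both implications. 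I would also note that $R/I=\bigoplus_{s\in S}R_s/(I\cap R_s)$ induces a sub-partial-groupoid of $S$, hence is again cancellative, and that the components $(R/I)_e=R_e/(I\cap R_e)$ over idempotents $e$ remain unital, since $1_{R_e}$ is a nonzero homogeneous idempotent and so cannot lie in the graded-nil ideal $I$; thus ``$R/I$ is graded nil clean'' is meaningful and the same characterization applies to it.

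For the implication ``$R$ graded nil clean $\Rightarrow$ $R/I$ graded nil clean'' I would argue directly rather than via a homogeneous homomorphism: given a homogeneous $\bar x\in(R/I)_s=R_s/(I\cap R_s)$, choose a preimage $x\in R_s$, write $x=f+b$ with $f$ a homogeneous idempotent and $b$ a homogeneous nilpotent of $R$, and pass to the quotient to get $\bar x=\bar f+\bar b$ with $\bar f$ a homogeneous idempotent and $\bar b$ a homogeneous nilpotent of $R/I$.

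For the converse, assume $R/I$ is graded nil clean. If $x\in R_s$ with $s$ not a nonzero idempotent, then $\bar x\in(R/I)_s$ is nilpotent by the characterization, so $x^n\in I$ for some $n$; if $x^n\neq0$ it is a homogeneous element of the graded-nil ideal $I$ and hence nilpotent, and in either case $x$ is nilpotent. Then, fixing a nonzero idempotent $e$ of $S$ with $R_e\neq0$, I would show $R_e$ is nil clean: the ideal $I\cap R_e$ of the unital ring $R_e$ consists of homogeneous elements of $I$, hence is a nil ideal; for $r\in R_e$ the element $\bar r\in(R/I)_e$ is graded nil clean, say $\bar r=\bar f+\bar b$, and directness of the sum in $R/I$ forces $\bar f,\bar b\in(R/I)_e=R_e/(I\cap R_e)$; then Proposition~27.1 in \cite{af} applied to $R_e$ and its nil ideal $I\cap R_e$ lifts the idempotent $\bar f$ to an idempotent $f\in R_e$, the element $r-f$ satisfies $\overline{r-f}=\bar b$ and is therefore nilpotent modulo a nil ideal, hence nilpotent, and $r=f+(r-f)$ exhibits $R_e$ as nil clean. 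Together with the previous paragraph, every homogeneous element of $R$ is graded nil clean.

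The only point I expect to require care --- in contrast with the group-graded Lemma~\ref{lemma2}, where all idempotents sit in the single component $R_e$ --- is the degree bookkeeping in $R/I$ that guarantees the idempotent and nilpotent summands of $\bar r$ land in $(R/I)_e$, together with the (harmless) need to run the idempotent-lifting step separately in each of the finitely many unital components $R_{e_i}$; it is precisely the cancellativity of $S$, through \cite{ha}, that makes these components unital and prevents nonzero homogeneous idempotents of $R$ from occurring elsewhere.
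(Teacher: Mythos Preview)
Your proposal is correct and follows essentially the same route as the paper: the paper's proof simply says to repeat the argument of Lemma~\ref{lemma2}, now over each nonzero idempotent $e$ of the cancellative groupoid $S$, and that is precisely what you unpack, including the same idempotent-lifting step via \cite[Proposition~27.1]{af} applied to the unital ring $R_e$ with its nil ideal $I\cap R_e$. Your explicit characterization of graded nil cleanness, the check that $(R/I)_e$ remains unital, and the degree bookkeeping forcing $\bar f,\bar b\in(R/I)_e$ are details the paper leaves implicit; your direct argument for the forward implication is equivalent to the paper's appeal to the canonical homogeneous homomorphism $R\to R/I$.
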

\begin{proof}
If $R$ is graded nil clean then $R/I$ is graded nil clean since it is a 
homomorphic
image of $R$ taken by a homogeneous homomorphism.\\
\indent Now, let $R/I$ be a graded nil clean ring. Let $r$ be a homogeneous 
element of $R.$
Assume first that $r\in R_e,$ where $e$ is an arbitrary nonzero idempotent 
element
of $S.$ As in the proof of 
Lemma~\ref{lemma2}, one obtains that $R_e$ is nil clean.  Again the case of 
$r\in R_g,$ where  $g\neq e,$ is dealt easily just as in the proof of 
Lemma~\ref{lemma2}.
\end{proof}
\begin{theorem}
Let $S$ be a finite cancellative partial groupoid, and $R=\bigoplus_{s\in 
S}R_s$ an $S$-graded ring inducing $S$ which is also PI and Jacobson radical 
ring. If $S$ has exactly one nonzero idempotent element $e,$ and if $R_e$ is 
nil clean, then $R$ is graded nil clean. 
\end{theorem}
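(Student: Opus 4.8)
The plan is to adapt, to the groupoid-graded setting, the proof of the corresponding statement for $G$-graded PI rings which are Jacobson radical, proved above. The new point is to replace the classical relation between $J(R)$ and $J(R_e)$ by the corresponding facts for the graded Jacobson radical $J^g(R)$ and the large graded Jacobson radical $J^g_l(R)$ of an $S$-graded ring inducing $S.$

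First I would observe that, $S$ being finite and cancellative with $e$ its unique nonzero idempotent, the component $R_e$ is an ordinary ring carrying the identity of $R.$ Since $R_e$ is nil clean, Proposition~3.16 in \cite{ajd} gives that $J(R_e)$ is nil.

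Next, using that $R$ is PI together with the finiteness of $S,$ I would pass to the nilpotency of $J(R).$ Here one invokes the groupoid-graded analogue of Theorem~3 in \cite{ko} — the PI transfer of nilpotency from the component radical — together with the description of $J^g(R)$ and $J^g_l(R)$ and their links with the classical $J(R)$ and with $J(R_e)$ recorded in \cite{ha1} (see also \cite{eig1,eig2,eig3}), which in the present situation (finite cancellative $S$ with a single nonzero idempotent) render the graded radicals homogeneous and reduce them to the component radical. Since $R$ is a Jacobson radical ring, $R=J(R),$ so $R$ is a nil ring; in particular every homogeneous element of $R$ is nilpotent. Hence $R$ is graded nil clean, either directly from the description of $S$-graded nil clean rings inducing $S$ recalled before the statement, or by applying Lemma~\ref{lemma2a} with $I=R.$

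The step I expect to be the main obstacle is precisely this transfer of nilpotency from $J(R_e)$ to $J(R)$ in the groupoid-graded PI context: in the $G$-graded case it is a one-line citation, whereas here one has to make sure that the needed groupoid-graded PI result and the behaviour of $J^g(R)$ and $J^g_l(R)$ for finite cancellative $S$ with a unique nonzero idempotent are genuinely available in the quoted references and apply without change. The remaining steps run parallel to the $G$-graded proof.
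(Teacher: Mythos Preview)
Your plan is essentially the paper's own proof: from $R_e$ nil clean deduce $J(R_e)$ nil, use the graded radical machinery together with a PI result to get $J(R)$ nil, and then $R=J(R)$ gives that every homogeneous element is nilpotent, hence $R$ is graded nil clean.

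Two small clarifications are worth making. First, the homogeneity of $J(R)$ is not a subtle fact about finite cancellative $S$: since $R$ is a Jacobson radical ring, $J(R)=R$, which is trivially a homogeneous ideal; from this and \cite{ha1} one gets $J^g_l(R)=J(R)=R$ and hence $J^g(R)=J^g_l(R)=R$. Second, the transfer of nilness you flag as the delicate step is in the paper obtained not from a groupoid-graded variant of Theorem~3 in \cite{ko}, but by invoking Theorem~12 in \cite{eig}, whose hypotheses are precisely the ones just arranged (finite cancellative $S$, unique nonzero idempotent $e$, $J(R_e)$ nil, PI, and $J^g(R)=J^g_l(R)$). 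With that citation in place your concern about availability disappears, and the rest of your outline matches the paper verbatim.
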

\begin{proof}
We first note that $J(R_e)$ is nil according to Proposition~3.16 in \cite{ajd}. 
Since $R$ is Jacobson radical ring, we have that $J(R)$ is homogeneous. Therefore
$J(R)$ coincides with the large graded Jacobson radical $J^g_l(R).$ Namely,
according to \cite{ha1}, the largest homogeneous ideal of $R$ contained in $J(R)$
coincides with $J^g_l(R).$ On the other hand, $J^g_l(R)\subseteq J^g(R)$ (see
\cite{ha1}), which implies that $J^g(R)=J^g_l(R)=R.$ Now, since all the assumptions
of Theorem~12 in \cite{eig} are satisfied, we have that $J(R)$ is nil. Therefore
$R=J(R)$ is nil and hence graded nil clean.
\end{proof}
\begin{theorem}
Let $S$ be a finite cancellative partial groupoid, $F$ a field with 
$\mathrm{char}(F)=0$ or $\mathrm{char}(F)>|S|,$ and let 
$R=\bigoplus_{s\in S}R_s$ be an $S$-graded $F$-algebra inducing $S$ which is 
also PI. Assume also that $R$ is graded local ring, that is, that it has a unique 
maximal homogeneous right ideal. If $S$ has
exactly one nonzero idempotent element $e$ such that 
$st=e\Rightarrow s=e\vee t=e$ $(s,t\in S),$ if $J^g(R)=J^g_l(R),$ and if $R_e$ is 
nil clean, then $R$ is graded nil clean. 
\end{theorem}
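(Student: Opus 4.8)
The plan is to mimic the proof of Theorem~\ref{gradedR} as closely as possible, with the group-theoretic input replaced by the corresponding statements for $S$-graded rings inducing $S$. First I would use that $R_e$ is nil clean to invoke Corollary~3.17 in \cite{ajd}, which gives that $J(R_e)$ is nil and $R_e/J(R_e)$ is nil clean. Since $R$ is PI, Theorem~12 in \cite{eig} (or the appropriate PI-result for $S$-graded rings inducing $S$ cited earlier) yields that the graded Jacobson radical is nil; together with the hypothesis $J^g(R)=J^g_l(R)$ and the characteristic assumption on $F$ (which plays the role of the order of $G$ being a unit), one should be able to conclude that $J^g(R)$ is a homogeneous nil ideal and that $R/J^g(R)$ is an $S$-graded ring inducing $S$. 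Then since $R$ is graded local, $R/J^g(R)$ is a graded division ring in this sense, i.e.\ the associated anneid is simple.

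Next I would pass to the homogeneous part / anneid picture, exactly as in the proof of Theorem~\ref{gradedR}: let $H_{R/J^g(R)}$ be the corresponding simple anneid and define the map $f:H_{R/J^g(R)}\to R_e/J(R_e)$ by sending a homogeneous class represented by $x\in R_e$ to $x+J(R_e)$ and sending everything else (classes from components other than $R_e$, or classes inside the radical) to $0$. Here the hypothesis $st=e\Rightarrow s=e\vee t=e$ is precisely what is needed to guarantee that $f$ respects the partial multiplication — it ensures that a product of two homogeneous classes lands in the $e$-component only when both factors are already in the $e$-component, so that $f$ is multiplicative. Well-definedness and additivity are routine; surjectivity is clear since $R_e$ surjects onto $R_e/J(R_e)$; and injectivity follows because $J^g(R)$ is proper and $1\in R_e$, so $R_e\neq J(R_e)$ and $\ker f=0$. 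Hence $H_{R/J^g(R)}\cong R_e/J(R_e)$ as anneids.

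From the isomorphism $H_{R/J^g(R)}\cong R_e/J(R_e)$ and the fact that $R_e/J(R_e)$ is nil clean, I would conclude that every homogeneous element of $R/J^g(R)$ is graded nil clean, i.e.\ $R/J^g(R)$ is graded nil clean. Finally, since $J^g(R)$ is graded-nil, Lemma~\ref{lemma2a} (the $S$-graded analogue of Lemma~\ref{lemma2}) lifts this to $R$, so $R$ is graded nil clean.

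I expect the main obstacle to be the bookkeeping around the radical: showing carefully that under the stated hypotheses $J(R)$ (or $J^g(R)$) is homogeneous and nil, that $J^g(R)=J^g_l(R)$ forces $R/J^g(R)$ to genuinely be an $S$-graded ring inducing $S$ with a well-behaved homogeneous part, and that "graded local" correctly translates to the associated anneid being simple. Verifying that $f$ is a homomorphism of anneids — in particular that the implication $st=e\Rightarrow s=e\vee t=e$ is exactly the condition making multiplicativity work, with no further constraints hiding — is the delicate point, though it parallels the argument in Theorem~\ref{gradedR} and the proof of Theorem~3.2 in \cite{eig3}.
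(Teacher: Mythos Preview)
Your proposal is correct and follows essentially the same route as the paper's proof: invoke Corollary~3.17 of \cite{ajd} for $R_e$, use Theorem~12 in \cite{eig} and the characteristic hypothesis (via Corollary~4 in \cite{ak2}) together with $J^g(R)=J^g_l(R)$ to identify $J(R)=J^g(R)$ as a homogeneous nil ideal, then pass to the anneid of $R/J(R)$ and run the isomorphism argument of Theorem~\ref{gradedR}, finishing with Lemma~\ref{lemma2a}. The only refinement worth noting is that the paper makes the radical bookkeeping explicit in the order just described (Theorem~12 in \cite{eig} gives $J(R)$, not $J^g(R)$, nil; homogeneity of $J(R)$ comes from \cite{ak2}; then $J(R)=J^g_l(R)=J^g(R)$), which resolves exactly the obstacle you flagged.
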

\begin{proof}
We first note that $J(R_e)$ is nil and  $R_e/J(R_e)$ is nil clean according to
Corollary~3.17 in \cite{ajd}. Now, $J(R)$ is nil according to Theorem~12 in
\cite{eig}. Also, $J(R)$ is homogeneous by Corollary~4 in \cite{ak2}. Therefore
$J(R)=J^g(R)$ since $J^g_l(R)$ coincides with the largest homogeneous ideal of 
$R$ contained in $J(R)$ (see \cite{ha1}). Hence $R/J(R)$ is an $S$-graded ring
inducing $S.$ Also, since $J(R_e)=J^g(R)\cap R_e$ holds, according to 
\cite{ha1}, the $e$-th component of $R/J(R)$ is $R_e/J(R_e).$ The rest of the 
proof goes as in the proof of Theorem~\ref{gradedR} with the help of 
Lemma~\ref{lemma2a}.
\end{proof}
\section*{Acknowledgements} The authors would like to express their sincere
gratitude to Professor Ivan Shestakov and to the referee for handling this manuscript.
The second author was supported by TUBITAK (No. 117F070). 

\flushleft\small{Emil Ili\'{c}-Georgijevi\'{c}\\
University of Sarajevo\\Faculty of Civil Engineering\\
Patriotske lige 30, 71000 Sarajevo, Bosnia and Herzegovina\\
e-mail: emil.ilic.georgijevic@gmail.com\\
\flushleft\small{Serap \c{S}ahinkaya}\\
Gebze Technical University\\Department of Mathematics\\
Gebze/Kocaeli, Turkey\\
e-mail: srpsahinkaya@gmail.com
\end{document}